\newtheorem{thm}{Theorem}[section]
\newtheorem{prop}[thm]{Proposition}
\newtheorem{lem}[thm]{Lemma}
\newtheorem{cor}[thm]{Corollary}
\newtheorem{defn}[thm]{Definition}
\newtheorem{rem}[thm]{Remark}}
\newcommand{\ra}{\rightarrow}
\newcommand{\dis}{\displaystyle}
\def\R{\mathbb R}
\def\N{\mathbb N}
\def\d{\text{\rm{d}}}
\def\E{\mathbb E}
\def\p{\mathbb P}\def\e{\text{\rm{e}}}
\def\la{\langle}
\def\raa{\rangle}
\def\La{\Lambda}
\def\veps{\varepsilon}
\def\S{\mathcal S}
\newcommand{\fin}{\hspace*{\fill}\rule{0.3em}{1ex}}
\newenvironment{proof}{{\bf \noindent Proof.}}{\fin}
\numberwithin{equation}{section}
\begin{document}

\title{Heavy tail and light tail of Cox-Ingersoll-Ross processes with regime-switching\footnote{Supported in
 part by    NNSFs of China No. 11431014, 11771327}}

\author{Tongtong Hou${}^b$,\ \ Jinghai Shao${}^{a}$\footnote{Corresponding author, Email: shaojh@bnu.edu.cn}\\[0.2cm]
{\small a: Center for Applied Mathematics, Tianjin
University, Tianjin 300072, China.}\\
{\small b: School of Mathematical Sciences, Beijing Normal University, Beijing 100875, China.}}
\maketitle

\begin{abstract}
  This work is denoted to studying the tail behavior of Cox-Ingersoll-Ross (CIR) processes with regime-switching. One essential difference shown in this work between CIR process with regime-switching and without regime-switching is that the stationary distribution for CIR process with regime-switching could be heavy-tailed.  Our results provide a theoretical evidence of the existence of regime-switching for interest rates model based on its heavy-tailed empirical evidence. In this work, we first provide sharp criteria to justify the existence of stationary distribution for the CIR process with regime-switching, which is applied to study the long term returns of interest rates. Then under the existence of the stationary distribution, we provide a criterion to justify whether its stationary distribution is heavy-tailed or not.
\end{abstract}

\noindent AMS subject Classification:\ 60J60, 60H10, 60H30

\noindent\textbf{Key words}: Regime-switching, Heavy tail, Light tail, Ergodicity

\section{Introduction}
Modelling the term structure of interest rates is a long-standing topic in financial economics. Many stochastic interest rates models have been proposed in the past several decades in order to provide a realistic and tractable method to describe the term structure. Some early contributions include Vasieck \cite{Va}, Dothan \cite{Do}, Cox et al. \cite{CIR} and Hull and White \cite{HW}, amongst others. Single-factor term  structure models have been extended to multi-factor ones in the literatures, for instance, Longstaff and Schwartz \cite{LS}, Duffie and Kan \cite{DK}.

Regime-switching models have emerged in many research fields such as biological, ecological, mathematical finance, economics, etc. Early applications of regime-switching models to economics include Hamilton \cite{Ham}, and Garcia and Perron \cite{GP}. Regime-switching behavior of interest rate models have been used in interest rates modelling. Empirical evidence provided in the finance literatures Aug and Bekaert \cite{ABa, ABb} suggests that the switching of regimes in interest rates matches well with business cycles. In addition to the statistical evidence, there are economic reasons
as well to believe that the regime shifts are important to understand the behavior of entire process.

The standard Cox-Ingersoll-Ross model does not consider the possibility of changes in regime.  As shown in Brown and Dybvig \cite{BD}, based on the empirical data of US. treasure yields, the poor empirical performance of CIR model may well suggest the existence of regime shifts. Hence, Gray in \cite{Gray} models the short interest rate as a discrete regime-switching process. Elliott and Siu \cite{ES} proposed a model of term structure of interest rates in a Markovian, regime-switching Health-Jarrow-Morton framework. Zhang et al. \cite{ZTH} showed the existence of stationary distribution of CIR process with Markov switching under certain conditions. Besides, Bao and Yuan \cite{BY} generalized the study of long term return of CIR-type models of Deelstra and Delbaen \cite{DD}, Zhao \cite{Zh} to the situation with Markov switching.

In this paper, we develop the study of CIR process with regime-switching in the aspect of analysis of tail property of its stationary distribution after providing sharp conditions to justify its existence and uniqueness. The existence of regime-switching can cause an essential difference in the tail behavior of the stationary distributions of interest rates models. The stationary distribution could be heavy-tailed for CIR process with regime-switching, however, the stationary distribution of CIR process without regime-switching should be always light-tailed. We refer to the monograph of Foss et al. \cite{Foss} on the study of heavy-tailed distributions in probability theory. It is useful to mention that heavy-tailed property is closely related to the long-tailed property of a distribution. According to the works \cite{DY} and \cite{Bar}, the Ornstein-Uhlenbeck process with regime-switching also presents the phenomenon that its stationary distribution could be light-tailed or heavy-tailed. \cite{DY} used the method based on the renewal theory and explicit expression of Ornstein-Uhlenbeck process. \cite{Bar} used the stochastic analysis method. We adopt the idea of \cite{Bar} to study the long time behavior of CIR process with regime-switching in this work. To study its recurrent property, we apply the criteria established by us in \cite{Sh15a} and \cite{SX}. We refer to \cite{Sh15a, SX, YZ} and references therein on the recent development in the study of general regime-switching diffusion processes.

The CIR process with regime-switching investigated in current work is determined by the following stochastic differential equation (SDE):
 \begin{equation}\label{1.1}
 \d r_t=a_{\La_t}(b_{\La_t}- r_t)\d t+2\sigma_{\La_t}\sqrt{r_t}\d B_t,\ \  r_0=x>0,
 \end{equation} where $a_i,\,b_i,\,\sigma_i\in \R$, $\sigma_i\neq 0$, and $(\La_t)$ is a continuous time Markov chain on the state space $\S=\{1,2,\ldots,N\}$ with $2\leq N\leq \infty$ which is independent of Brownian motion $(B_t)$. The transition rate matrix  of $(\La_t)$ is denoted by $Q=(q_{ij})$, which is assumed to be irreducible and conservative. To provide a precise expression of our results, we present the following theorem, which is a partial collection of our Theorems \ref{t2.1} and \ref{t3.2} below.
 \begin{thm}\label{t1.1}
 Assume $a_ib_i\geq 2\sigma_i^2$ for every $i\in \S$, $N<\infty$. Let $(\mu_i)$ be the invariant probability measure of $(\La_t)$. Assume $\sum_{i\in\S}\mu_i a_i>0$. Then $(r_t,\La_t)$ is positive recurrent. Denote by $\pi$ its stationary distribution.
 Let $\kappa$ be defined by \eqref{kappa}, and assume $\kappa>1$.
 \begin{itemize}
   \item[(i)] If $a_{\min}:=\min_{i\in \S} a_i>0$, then there exists some $\delta>0$ such that
   \[\int_{\R_+\times\S} \e^{\delta y}\d \pi<\infty.\]
   \item[(ii)] If $a_{\min}<0$, then
   $\int_{\R_+\times \S}y^p\d \pi<\infty$ if and only if $p\in (0,\kappa)$.
 \end{itemize}
 \end{thm}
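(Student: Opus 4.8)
The plan is to read off both parts from the action of the extended generator
\[ \mathcal{L}f(x,i) = a_i(b_i-x)f'(x,i) + 2\sigma_i^2 x\, f''(x,i) + \sum_{j\in\mathcal{S}} q_{ij}f(x,j) \]
on test functions of product type $V(x,i)=c_i x^p$ with $c_i>0$. A direct computation gives
\[ \mathcal{L}V(x,i) = \big((Q-pD)c\big)_i\,x^p + p\,c_i\big(a_ib_i+2\sigma_i^2(p-1)\big)\,x^{p-1}, \qquad D:=\diag(a_i). \]
Taking $c=c(p)$ to be the positive Perron eigenvector of the Metzler matrix $Q-pD$ (which exists since $Q$ is irreducible) with principal eigenvalue $\xi(p)$, the top-order coefficient becomes $\xi(p)c_i$. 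The map $p\mapsto\xi(p)$ is convex with $\xi(0)=0$ and $\xi'(0)=-\sum_i\mu_ia_i<0$ by the standing assumption; I expect the constant $\kappa$ of \eqref{kappa} to be exactly the second zero of $\xi$, so that $\xi<0$ on $(0,\kappa)$ and $\xi>0$ on $(\kappa,\infty)$. Whether a finite second zero exists at all is decided by the sign of $a_{\min}$ (since $\xi(p)\sim -a_{\min}p$ as $p\to\infty$), and this is precisely what separates the two regimes below.

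For part (i), assume $a_{\min}>0$ and test instead with the exponential $V(x,i)=\e^{\delta x}$, which is independent of $i$ so the jump part drops out:
\[ \mathcal{L}V(x,i)=\delta\,\e^{\delta x}\big(a_ib_i+(2\sigma_i^2\delta-a_i)x\big). \]
Because $N<\infty$, I may fix $0<\delta<\min_i a_i/(2\sigma_i^2)$, which makes the coefficient of $x$ uniformly negative across the finitely many states; hence $\mathcal{L}V\le -\lambda V$ outside a compact set, and the Foster--Lyapunov/recurrence criteria of \cite{Sh15a,SX} yield $\int_{(0,\infty)\times\mathcal{S}}\e^{\delta y}\,\d\pi<\infty$.

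For the sufficiency half of (ii), fix $p\in(0,\kappa)$, so $\xi(p)<0$. Using $V=c(p)_i x^p$, smoothly flattened to a bounded function on $(0,1]$---a harmless modification since $\int_{\{y\le1\}}y^p\,\d\pi$ is trivially finite---the negative leading term $\xi(p)c_ix^p$ dominates the $x^{p-1}$ remainder for large $x$, producing a drift inequality $\mathcal{L}V\le-\theta V+C\mathbf{1}_K$. Integrating this against the stationary measure $\pi$ (equivalently, invoking the moment bound attached to the recurrence criterion) gives $\int y^p\,\d\pi<\infty$. That $0$ is inaccessible, which is exactly the content of the Feller-type condition $a_ib_i\ge2\sigma_i^2$, is what lets me ignore the origin throughout.

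The necessity half is the crux. Suppose $p\ge\kappa$ and, for contradiction, $\int y^p\,\d\pi<\infty$; take $V=c(p)_i x^p$ with the Perron eigenvector, so that
\[ \mathcal{L}V(x,i)=\xi(p)\,c_ix^p+p\,c_i\big(a_ib_i+2\sigma_i^2(p-1)\big)x^{p-1}. \]
Now both hypotheses are used decisively: $\xi(p)\ge0$ since $p\ge\kappa$, while $\kappa>1$ together with $a_ib_i\ge2\sigma_i^2$ forces $a_ib_i+2\sigma_i^2(p-1)\ge2\sigma_i^2p>0$, so that $\mathcal{L}V(x,i)>0$ for every $x>0$ and $i\in\mathcal{S}$. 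Stationarity of $\pi$ should on the other hand give $\int\mathcal{L}V\,\d\pi=0$; a strictly positive integrand with vanishing integral is the contradiction, whence $\int y^p\,\d\pi=\infty$. The main obstacle is rigorously justifying $\int\mathcal{L}V\,\d\pi=0$ for this unbounded $V$: I would obtain it by a localisation argument, applying Dynkin's formula under the stationary law up to the exit time of $[\varepsilon,R]\times\mathcal{S}$, then sending $R\uparrow\infty$ and $\varepsilon\downarrow0$, using the assumed finiteness $\int y^p\,\d\pi<\infty$ for dominated convergence and the inaccessibility of $0$ to discard the boundary contributions.
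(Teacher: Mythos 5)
Your setup, part (i), and the sufficiency half of (ii) are essentially sound and run parallel to the paper. Your Perron eigenvalue $\xi(p)$ of $Q-p\,\diag(a_1,\ldots,a_N)$ equals $-\eta_p$ in the paper's notation (the spectral abscissa of an irreducible Metzler matrix is a real eigenvalue with positive eigenvector), so your ``second zero of $\xi$'' is precisely the $\kappa$ of \eqref{kappa}; your exponential test function in (i) uses the same threshold $\delta<\min_i a_i/(2\sigma_i^2)$ as the paper's computation; and the drift inequality $\mathcal{L}V\le -\theta V+C\mathbf{1}_K$ for $p\in(0,\kappa)$ does yield $\pi(V)<\infty$ by the standard Meyn--Tweedie comparison argument --- this direction of localisation is safe because $V\ge 0$ and the nonnegative quantity being bounded sits on the favorable side of Fatou/monotone convergence. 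You never explicitly prove the positive recurrence asserted in the statement, but the same drift condition with a small $p$ supplies it, so that is a presentational omission rather than a mathematical one.

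The genuine gap is in the necessity half of (ii), exactly where you flagged it, and your proposed fix does not close it. Write $\tau_R$ for the exit time of $[\veps,R]\times\S$. Dynkin's formula under $\p_\pi$ gives $\E_\pi V(r_{t\wedge\tau_R},\La_{t\wedge\tau_R})=\pi(V)+\E_\pi\int_0^{t\wedge\tau_R}\mathcal{L}V\,\d s$; the right side increases to $\pi(V)+t\,\pi(\mathcal{L}V)$ by monotone convergence, while the left side splits as $\E_\pi[V(r_t,\La_t);\tau_R>t]+\E_\pi[V(r_{\tau_R},\La_{\tau_R});\tau_R\le t]$, whose first term tends to $\pi(V)$. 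Hence the boundary term $\E_\pi[V(r_{\tau_R},\La_{\tau_R});\tau_R\le t]\approx R^p\,\p_\pi(\sup_{s\le t}r_s\ge R)$ converges to $t\,\pi(\mathcal{L}V)$: showing it vanishes is \emph{equivalent} to the identity $\pi(\mathcal{L}V)=0$ you want, so the localisation argument is circular. Dominated convergence is unavailable because nothing dominates the stopped values: $\pi(y^p)<\infty$ controls $r_t$ at fixed times only, $V(r_t,\La_t)$ is merely a local submartingale (its Brownian part $\int_0^t 2p\,c_{\La_s}\sigma_{\La_s}r_s^{p-1/2}\,\d B_s$ would be a true martingale under $\p_\pi$ if something like $\pi(y^{2p-1})<\infty$ held, which cannot be assumed and is in fact false here), and Doob's inequality for the stopped submartingales yields only an $O(R^{-p})$ tail for the running maximum where $o(R^{-p})$ is needed. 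The paper circumvents precisely this obstruction by a quenched argument unavailable to your purely annealed generator computation: conditioning on the whole trajectory of $(\La_t)$ (using its independence from $(B_t)$), the process $(r_t)$ becomes a time-inhomogeneous CIR process all of whose moments are finite on bounded time intervals, so the conditional moment $\alpha_\kappa(t)$ satisfies an exact variation-of-constants identity; the lower bound $\E\,\e^{-\kappa\int_u^t a_{\La_s}\d s}\ge C_1(\kappa)$ of Lemma \ref{t3.1} (valid since $\eta_\kappa=0$) then gives $\E_\pi r_t^\kappa\ge C\,t\int_{\R_+}y^{\kappa-1}\pi(\d y,\S)\to\infty$, contradicting $\pi(y^\kappa)<\infty$. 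Some such quenched (or renewal-theoretic) input appears unavoidable, and without it your necessity argument cannot be completed as stated.
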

 The assertion (i) and (ii) of previous theorem tells us that the fact $a_{\min}>0$ or $a_{\min}<0$ determines the stationary distribution $\pi$  of $(r_t,\La_t)$ being light-tailed or heavy-tailed under some conditions.   Recall the fact on the CIR model without switching:
 \[\d r_t=a(b-r_t)\d t+2\sigma\sqrt{r_t}\d B_t.\]
 When the coefficient $a>0$, this system owns a light-tailed stationary distribution; when the coefficient $a<0$, the system has no stationary distribution. Therefore, the existence of switching can derive the equilibrium between the existence of stationary distribution and tail behavior of stationary distribution. Moreover, for the CIR model \eqref{1.1}, the heavy-tail behaviour of the stationary distribution must imply the existence of regime-switching. This provides a theoretical evidence of existence of regime-switching for interest rates model besides the empirical evidence given in \cite{ABa,ABb, BD}.

The paper is organized as follows. In Section 2, the criteria on justifying the transience and recurrence of CIR process with regime-switching is presented. They are given separately according to the number of states in $\S$ being finite or infinite, and the transition rate matrix being state-independent or state-dependent. The tail behaviour of the stationary distribution of $(r_t,\La_t)$ is investigated in Section 3.

\section{Recurrent property of CIR process with regime-switching}

In this work, we are interested in the situation that the process $(r_t)$ stays strictly positive. Via the connection with the Bessel process, it is known that if $a_i b_i\geq 2\sigma_i^2$, the CIR process in this fixed environment $i\in \S$ is always strictly positive (cf. \cite[Chapter 6.3]{Yor}). We assume that
\begin{itemize}
  \item[(H1)] $a_ib_i\geq 2\sigma_i^2,\qquad \forall\,i\in\S.$
\end{itemize}
Under the condition (H1), it is easy to see that the process $(r_t)$ defined by \eqref{1.1} always stays strictly positive. We refer the readers to \cite{Yor} for more details on the Bessel process and CIR process without switching.

Let $R_t=\sqrt{r_t}$, then under the condition (H1), $R_t$ satisfies the following SDE:
\begin{equation}\label{2.1}
\d R_t=\frac{1}{2R_t} \big(a_{\La_t}b_{\La_t}-\sigma_{\La_t}^2-a_{\La_t} R_t^2\big)\d t+\sigma_{\La_t}\d B_t.
\end{equation}
The recurrent property of $(R_t,\La_t)$ is clearly equivalent to that of $(r_t,\La_t)$, but the diffusion coefficient is not degenerated. In the following we shall use the Lyapunov condition on recurrence established in \cite{Sh15a} for regime-switching diffusion processes to study the recurrent property of $(R_t,\La_t)$. We shall consider first the case $N<\infty$ then the case $N=\infty$.

Note that $(R_t,\La_t)$ is a Markov process, but $(R_t)$ itself is not a Markov process. The infinitesimal generator of $(R_t,\La_t)$ is given by:
\begin{equation}\label{2.2}
\begin{split}
\mathscr A f(x,i)&=L^{(i)} f(\cdot,i)(x)+Qf(x,\cdot)(i)\\
&=\frac{1}2\sigma_i^2\frac{\d^2}{\d x^2} f(x,i)+
\frac{1}{2x} \big(a_ib_i-\sigma_i^2-a_ix^2\big)\frac{\d}{\d x} f(x,i)+ \sum_{j\in\S} q_{ij}f(x,j)
\end{split}
\end{equation} for smooth function $f$ on $\R_+\times \S$.

\begin{thm}\label{t2.1}
Assume (H1) holds and $N<\infty$. Let $(\mu_i)$ be the invariant probability measure of $(\La_t)$. Then
\begin{itemize}
  \item[(i)] if $\sum_{i\in\S}\mu_i a_i>0$, $(R_t,\La_t)$ (hence $(r_t,\La_t)$) is positive recurrent;
  \item[(ii)] if $\sum_{i\in \S}\mu_i a_i<0$, $(R_t,\La_t)$ (hence $(r_t,\La_t)$) is transient.
  \end{itemize}
\end{thm}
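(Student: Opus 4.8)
The plan is to apply the Lyapunov criteria for recurrence and transience of regime-switching diffusions established in \cite{Sh15a} to the non-degenerate process $(R_t,\La_t)$ with generator $\mathscr A$ given in \eqref{2.2}, using test functions built from $\log x$ together with a correction depending only on the discrete component, which will absorb the mismatch between the individual rates $a_i$ and their average $\bar a:=\sum_{i\in\S}\mu_i a_i$. First I would record the elementary computation
\[L^{(i)}\log x=\frac{a_ib_i-2\sigma_i^2}{2x^2}-\frac{a_i}{2},\]
so that under (H1) the first term is nonnegative and $O(x^{-2})$, while the leading behaviour for large $x$ is the regime-dependent constant $-a_i/2$. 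To average out this dependence I would solve the Poisson equation $Q\psi=\tfrac12(a-\bar a\mathbf 1)$ for a vector $(\psi_i)_{i\in\S}$; this is solvable because $Q$ is irreducible and $\sum_{i\in\S}\mu_i(a_i-\bar a)=0$ is precisely the Fredholm solvability condition. Setting $V(x,i)=\log x+\psi_i$ then yields
\[\mathscr A V(x,i)=\frac{a_ib_i-2\sigma_i^2}{2x^2}-\frac{\bar a}{2}.\]

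For part (i), $\bar a>0$, so there is $M>0$ with $\mathscr A V(x,i)\le-\bar a/4<0$ for all $x\ge M$ and $i\in\S$. Since (H1) makes the boundary $0$ inaccessible for $(R_t)$, escape can only occur toward $+\infty$, where $V\to\infty$; the Foster-type criterion of \cite{Sh15a}, with a strictly negative constant bound on $\mathscr A V$ outside a compact subset of $(0,\infty)\times\S$, then gives positive recurrence.

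For part (ii), $\bar a<0$, I would instead construct a bounded superharmonic function. Consider $u(x,i)=x^{-\lambda}c_i$ with $\lambda>0$ small and $(c_i)$ strictly positive to be chosen. A direct computation gives, for large $x$,
\[\mathscr A u(x,i)=x^{-\lambda}\Big[\tfrac{\lambda}{2}a_ic_i+(Qc)_i+O(x^{-2})\Big].\]
I would take $(c_i)$ to be the Perron eigenvector of the essentially nonnegative matrix $Q+\tfrac{\lambda}{2}\diag(a)$, with simple principal eigenvalue $\eta(\lambda)$; since $\eta(0)=0$ and, by first-order perturbation through the left/right Perron eigenvectors $\mu$ and $\mathbf 1$ of $Q$, $\eta'(0)=\tfrac12\sum_{i\in\S}\mu_ia_i=\bar a/2<0$, one has $\eta(\lambda)<0$ for small $\lambda>0$, whence $\tfrac\lambda2 a_ic_i+(Qc)_i=\eta(\lambda)c_i<0$ and so $\mathscr A u\le 0$ for $x$ large. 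As $u$ is bounded on $[M,\infty)\times\S$, vanishes as $x\to\infty$, and is bounded below by a positive constant on $\{x=M\}$, a standard supermartingale argument shows that from sufficiently large $x$ the process reaches $+\infty$ without returning to $\{x=M\}$ with positive probability, so the transience criterion of \cite{Sh15a} applies.

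The main obstacle I anticipate is matching these local computations to the precise hypotheses of \cite{Sh15a}: the sign bounds on $\mathscr A V$ and $\mathscr A u$ only hold for $x$ large, so one must control the $O(x^{-2})$ terms uniformly in $i\in\S$ (immediate since $N<\infty$) and, more delicately, argue that the behaviour near the inaccessible boundary $0$ does not affect the recurrence/transience conclusion. The second genuinely nontrivial point is the eigenvalue perturbation in (ii): one needs analyticity of the simple principal eigenvalue $\eta(\lambda)$ of $Q+\tfrac\lambda2\diag(a)$ and the identification $\eta'(0)=\bar a/2$, which is exactly where the averaged quantity $\sum_{i\in\S}\mu_i a_i$ enters decisively and fixes the sign of the drift at infinity.
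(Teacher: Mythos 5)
Your proposal is correct in substance, but it follows a genuinely different route from the paper. The paper's proof is much shorter: it uses the single family of test functions $h(x)=x^p$ (with $p>0$, $p\neq 1$ for positive recurrence, $p<0$ for transience), computes $L^{(i)}h(x)\le \tfrac{p}{2}\bigl(-a_i+\mathrm{sgn}(p)\veps\bigr)h(x)$ for $x$ large, and then invokes the averaged Lyapunov criterion of \cite[Theorem 3.1]{Sh15a}, whose hypothesis is exactly $\sum_{i\in\S}\mu_i\beta_i<0$ with $\beta_i=\tfrac{p}{2}(-a_i+\mathrm{sgn}(p)\veps)$; all of the machinery that converts this averaged condition into recurrence or transience is delegated to that citation. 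You instead perform the averaging by hand: the additive corrector $\psi$ solving $Q\psi=\tfrac12(a-\bar a\mathbf 1)$ (whose solvability you correctly justify by the Fredholm alternative) turns $\log x$ into a Lyapunov function with constant negative drift, while the multiplicative corrector $c\gg 0$, the Perron eigenvector of $Q+\tfrac{\lambda}{2}\diag(a)$ together with the perturbation identity $\eta'(0)=\bar a/2<0$, does the same for $x^{-\lambda}$. Your computations check out ($L^{(i)}\log x$, the eigenvalue derivative, the supermartingale estimate), and in effect you re-prove, in this special case, the very criterion the paper cites; indeed the Perron-eigenvector device in your part (ii) is the same mechanism that underlies both \cite[Theorem 3.1]{Sh15a} and Lemma \ref{t3.1} (from \cite{Bar}) used later in the paper. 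What your route buys is self-containedness and a transparent identification of exactly where $\bar a=\sum_{i\in\S}\mu_i a_i$ enters (as a solvability condition, respectively an eigenvalue derivative); what the paper's route buys is brevity, and the cited theorem delivers positive recurrence directly in the form needed for Corollary \ref{t2.3}. One caveat applies to both write-ups equally: the drift bounds are verified only on $\{x\ge M\}$, whose complement $(0,M)\times\S$ is not relatively compact in the state space $(0,\infty)\times\S$, and in fact $\mathscr A V$ and $L^{(i)}h$ can be large and positive near $x=0$ when $a_ib_i>2\sigma_i^2$; so one must still argue that under (H1) the boundary $0$ is an entrance-type boundary, uniformly in $i\in\S$, so that excursions near $0$ do not spoil positive recurrence. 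You flag this point explicitly but do not close it, while the paper leaves it implicit in its application of \cite{Sh15a} (which is formulated for diffusions without such a boundary); since the gap is common to both arguments, it does not count against your proposal relative to the paper.
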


\begin{proof}
  Take $h(x)=x^p$ for $p\neq 1$. Then for any $\veps>0$ there exists $M>0$ such that for any $x\geq M$,
  \begin{align*}
    L^{(i)} h(x)&=\frac p2 x^p\big[\big((p-1)\sigma_i^4+a_ib_i-\sigma_i^2\big)/x^2-a_i\big]\\
                &\leq \frac{p}2\big(-a_i+\mathrm{sgn}(p)\veps\big)h(x).
  \end{align*}

  If $\sum_{i\in\S}\mu_i a_i>0$, we take $p>0,\,p\neq 1$ in the definition of $h(x)$. Then there exists $\veps>0$ so that $\sum_{i\in\S}\mu_i(-a_i+\veps)<0$, and hence $\sum_{i\in\S}\frac{p}{2}\mu_i(-a_i+\veps)<0$.
  According to \cite[Theorem 3.1]{Sh15a}, since $h(x)\ra \infty$ as $x\ra \infty$, $(R_t,\La_t)$ is positive recurrent. This implies that $(r_t, \La_t)$ is also positive recurrent in this case.

  If $\sum_{i\in \S}\mu_i a_i<0$, we take $p<0$ in the definition of $h(x)$. Then there exists $\veps>0$ such that $\sum_{i\in\S} \mu_i (-a_i-\veps)>0$. By
  \cite[Theorem 3.1]{Sh15a}, $(R_t,\La_t)$ and hence $(r_t,\La_t)$ is transient due to the fact $h(x)\ra 0$ as $x\ra \infty$.
\end{proof}

\begin{rem}
  Applying previous theorem to the case $N=1$, i.e. a CIR process with no switching, we can get that when $a_i>0$, the corresponding process $(r_t)$ is positive recurrent; when $a_i<0$,  $(r_t)$ is transient. Invoking our discussion in next section on the tail behavior of CIR process with switching, the divergence of the process $(r_t)$ at some environment $i\in\S$ causes the heavy tail property of its stationary distribution. Also the divergence must be restricted by the condition that $\sum_{i\in\S}\mu_i a_i>0$. If not, the process $(r_t)$ will not admit stationary distribution.
\end{rem}

  Next, we proceed to providing two extension of the CIR process \eqref{1.1} with regime-switching. The model $(r_t,\La_t)$ defined by \eqref{1.1} have a meaningful extension that the change of environment $(\La_t)$ can be impacted by the value of $(r_t)$. Namely, the switching rate of the process $(\La_t)$ could be impacted by the process $(r_t)$. Precisely,
  \begin{equation}\label{s-dep}
  \p(\La_{t+\Delta}=j|\La_t=i, r_t=x)=\begin{cases}
     q_{ij}(x)\Delta+o(\Delta),\ \ & i\neq j,\\
     1+q_{ii}(x)\Delta+o(\Delta),\ \ &i=j,
  \end{cases}
  \end{equation}
  provided $\Delta>0$ small enough. Assume that $x\mapsto q_{ij}(x)$ is Lipschitz continuous and $\sup_{x\in \R_+}\sum_{j\neq i} q_{ij}(x)<\infty$ for each $i\in \S$. Then, there is a strong solution  $(r_t,\La_t)$ satisfying \eqref{1.1} and \eqref{s-dep} (cf. \cite{Sh15c}). $(r_t,\La_t)$ is called a state-dependent regime-switching process, whose recurrent property has been studied in \cite{Sh15a} as well. Via the non-singular M-matrix theory, a criterion was established to study the recurrence of CIR process with state-dependent regime-switching.
  
  Before presenting the result on state-dependent regime-switching CIR process, let us recall some useful notation.  
  Let $B$ be a matrix or vector. By $B\geq 0$ we mean that all elements of $B$ are non-negative. By $B\gg 0$ we mean that all elements of $B$ are positive. 
  \begin{defn}[M-Matrix]
    A square matrix $A=(a_{ij})$ is called an M-matrix if $A$ can be expressed in the form $A=s I-B$ with some $B\geq 0$ and $s\geq \mathrm{Ria}(B)$, where $I$ is the identity matrix and $\mathrm{Ria}(B)$ the spectral radius of $B$. When $s>\mathrm{Ria}(B)$, $A$ is called a non-singular M-matrix. 
  \end{defn}
   There are 50 equivalent conditions on the non-singular M-matrix given in the book \cite{BP}. We collect some easily verified conditions below.
   \begin{prop}[\cite{BP}] The following statements are equivalent.
   \begin{enumerate}
     \item $A$ is a non-singular $n\times n$ M-matrix.
     \item All of the principal minors of $A$ are positive, that is 
     \[\begin{vmatrix}
       a_{11}&\ldots&a_{1k}\\ \vdots& &\vdots\\ a_{k1}&\ldots&a_{kk}
     \end{vmatrix} >0 \ \ \text{for every $k=1,2,\ldots,n$}.\]
     \item Every real eigenvalue of $A$ is positive.
   \end{enumerate}
   \end{prop}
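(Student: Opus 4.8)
The three conditions are meant as equivalences within the class of $Z$-matrices (matrices with nonpositive off-diagonal entries), a structure already forced by condition (1): writing $A=sI-B$ with $B\geq 0$ makes every off-diagonal entry of $A$ equal to $-b_{ij}\leq 0$. The whole argument rests on the Perron--Frobenius theorem applied to the nonnegative matrix $B$. The plan is to establish (1) $\Leftrightarrow$ (3) first, which is almost immediate from Perron--Frobenius, and then to close the circle with (1) $\Rightarrow$ (2) $\Rightarrow$ (1), which brings the determinantal criterion into the equivalence. Throughout I write $\rho(B)$ for the spectral radius of $B$.

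For (1) $\Leftrightarrow$ (3): by Perron--Frobenius the spectral radius $\rho(B)$ is itself an eigenvalue of $B$, and every real eigenvalue $\mu$ of $B$ satisfies $\mu\leq|\mu|\leq\rho(B)$. Since the eigenvalues of $A=sI-B$ are exactly $s-\lambda$ as $\lambda$ ranges over the eigenvalues of $B$, the smallest real eigenvalue of $A$ is precisely $s-\rho(B)$, and this real eigenvalue always exists. Hence ``every real eigenvalue of $A$ is positive'' is equivalent to $s-\rho(B)>0$, i.e. $s>\rho(B)$, which is exactly the defining condition of a non-singular M-matrix. This settles both directions at once.

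For (1) $\Rightarrow$ (2): I would first record that a non-singular M-matrix has positive determinant, because its real eigenvalues are positive by (3) while the non-real ones occur in conjugate pairs $\lambda,\bar\lambda$ with product $|\lambda|^2>0$, so $\det A=\prod_i\lambda_i>0$. It then suffices to show that each leading principal submatrix $A_k$ is again a non-singular M-matrix, which yields $\det A_k>0$, i.e. condition (2). Here I would use the semipositivity characterization --- itself a short consequence of Perron--Frobenius --- that a $Z$-matrix is a non-singular M-matrix iff there is a vector $v\gg 0$ with $Av\gg 0$. Taking such a $v$ for $A$ and letting $v_k$ be its first $k$ coordinates, the sign conditions $a_{ij}\leq 0$ for $i\neq j$ give $(A_kv_k)_i=(Av)_i-\sum_{j>k}a_{ij}v_j\geq (Av)_i>0$, so $A_kv_k\gg 0$ and $A_k$ inherits the M-matrix property.

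For the converse (2) $\Rightarrow$ (1), I would argue by induction on the order $n$. Assuming all leading principal minors of the $Z$-matrix $A$ are positive, the induction hypothesis makes the top-left block $A_{n-1}$ a non-singular M-matrix, hence $A_{n-1}^{-1}\geq 0$. Writing the Schur-complement identity $\det A=\det(A_{n-1})\,(a_{nn}-c^{\top}A_{n-1}^{-1}b)$, where $b,c\leq 0$ are the off-diagonal blocks, the positivity of $\det A$ and $\det A_{n-1}$ forces the Schur complement to be positive; one then reconstructs a certifying vector $v\gg 0$ with $Av\gg 0$ to conclude that $A$ is a non-singular M-matrix. I expect this inductive Schur-complement step to be the main obstacle, since it requires the nonnegativity of $A_{n-1}^{-1}$ (another Perron--Frobenius consequence) together with careful sign bookkeeping to build the positive vector, whereas the equivalence (1) $\Leftrightarrow$ (3) is essentially a one-line consequence of the location of the Perron root.
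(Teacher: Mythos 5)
The paper itself gives no proof of this proposition: it is quoted from Berman and Plemmons \cite{BP} as a small sample of the fifty known characterizations of non-singular M-matrices, so there is no argument in the paper to compare yours against step by step; the comparison is ``citation as black box'' versus your self-contained Perron--Frobenius proof, and what your route buys is precisely the point the paper's bare statement glosses over, which you correctly make explicit: conditions (2) and (3) are equivalent to (1) only within the class of $Z$-matrices, since without the sign restriction $a_{ij}\leq 0$ for $i\neq j$ the equivalence fails (e.g.\ $\bigl(\begin{smallmatrix}1&1\\0&1\end{smallmatrix}\bigr)$ satisfies (2) and (3) but is not an M-matrix of any kind). Your (1) $\Leftrightarrow$ (3) via the location of the Perron root is correct, as is (1) $\Rightarrow$ (2) by restricting a certifying vector to the first $k$ coordinates, granted the semipositivity characterization you invoke; that characterization is indeed a short Perron--Frobenius consequence, but it should be recorded (forward: $A^{-1}=s^{-1}\sum_{k\geq 0}(B/s)^{k}\geq 0$ and $v=A^{-1}\mathbf{1}$; converse: $Bv\ll sv$ with $v\gg 0$ gives $\rho(B)\leq \max_i (Bv)_i/v_i<s$ by sub-invariance). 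The only genuine incompleteness is the step you yourself flag in (2) $\Rightarrow$ (1), the reconstruction of the certifying vector; it closes without difficulty and you should write it out: with $\sigma=a_{nn}-c^{\top}A_{n-1}^{-1}b=\det A/\det A_{n-1}>0$, the block-inverse formula
\[
A^{-1}=\begin{pmatrix} A_{n-1}^{-1}+\sigma^{-1}A_{n-1}^{-1}b\,c^{\top}A_{n-1}^{-1} & -\sigma^{-1}A_{n-1}^{-1}b\\[2pt] -\sigma^{-1}c^{\top}A_{n-1}^{-1} & \sigma^{-1}\end{pmatrix}
\]
has all four blocks nonnegative, because $A_{n-1}^{-1}\geq 0$ (induction hypothesis plus the Neumann series), $b\leq 0$, $c\leq 0$ imply $bc^{\top}\geq 0$, $-b\geq 0$, $-c\geq 0$, and $\sigma>0$; hence $A^{-1}\geq 0$, no row of $A^{-1}$ can vanish since $A^{-1}A=I$, and so $v=A^{-1}\mathbf{1}\gg 0$ satisfies $Av=\mathbf{1}\gg 0$, making $A$ a non-singular M-matrix by your own semipositivity criterion. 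With that computation inserted, your proof is complete and correct.
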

   
   We construct an auxiliary Markov chain $(\tilde \La_t)$ on $\S$ with a conservative $Q$-matrix defined by:
   \begin{equation}\label{matrix}
   \tilde q_{ik}=\begin{cases}
     \sup_{x\in\R_+} q_{ik}(x)\ \ &\text{if}\ k<i,\\
     \inf_{x\in\R_+} q_{ik}(x)\ \ &\text{if}\ k>i,
   \end{cases} \quad \text{and}\ \tilde q_{ii}=-\sum_{k\neq i} \tilde q_{ik}.
   \end{equation}
   This auxiliary Markov chain $(\tilde \La_t)$ helps us to control the switching process $(\La_t)$, which is associated with the matrix $H$ defined below. One can reorder the set $\S$ and use above definition to obtain alternative auxiliary Markov chain.
   
   \begin{thm}\label{t-s-dep}
   Let $(r_t,\La_t)$ be defined by \eqref{1.1} and \eqref{s-dep} with $N<\infty$. Assume (H1) holds. Set $\mathrm{diag}(a_1,\ldots,a_N)$ the diagonal matrix with diagonal generated by the vector $(a_1,\ldots,a_N)$. Define the $N\times N$ matrix
   \[H=\begin{pmatrix}
     1& 1&1&\ldots&1\\
     0&1&1&\ldots&1\\
     \vdots&\vdots&\vdots&\ldots&\vdots\\
     0&0&0&\ldots&1
   \end{pmatrix}.\]
   Then, $(R_t,\La_t)$ (and hence $(r_t,\La_t)$) is positive recurrent if there exists some $p>0$ such that the matrix $-\big(\tilde Q-(p/2)\mathrm{diag}(a_1,\ldots,a_N)\big)H$ is a non-singular M-matrix; is transient if there exists some $p<0$ such that the matrix $-\big(\tilde Q-(p/2)\mathrm{diag}(a_1,\ldots,a_N)\big)H$ is a non-singular M-matrix.
   \end{thm}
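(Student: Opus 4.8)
The plan is to mimic the proof of Theorem \ref{t2.1}, replacing the scalar test function $x^p$ by the environment-weighted function $f(x,i)=\xi_i x^p$ for a suitable vector $\xi=(\xi_1,\ldots,\xi_N)\gg 0$, and then to read off the sign of $\mathscr A f$ for large $x$ directly from the M-matrix hypothesis. Since the generator of $(R_t,\La_t)$ acts by $\mathscr A f(x,i)=L^{(i)}f(\cdot,i)(x)+\sum_{j}q_{ij}(x)f(x,j)$, applying it to $f(x,i)=\xi_i x^p$ gives
\[
\mathscr A f(x,i)=\xi_i L^{(i)}x^p+x^p\sum_{j\in\S}q_{ij}(x)\xi_j.
\]
The first term is controlled exactly as in Theorem \ref{t2.1}: for every $\veps>0$ there is an $M>0$, uniform in $i$ since $N<\infty$, with $L^{(i)}x^p\leq \frac p2(-a_i+\mathrm{sgn}(p)\veps)x^p$ for all $x\geq M$.

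The decisive new step is to dominate the state-dependent switching term by the auxiliary chain. First I would restrict to nonincreasing weights, $\xi_1\geq\xi_2\geq\cdots\geq\xi_N>0$. Using conservativeness of $(q_{ij}(x))$ to write $\sum_j q_{ij}(x)\xi_j=\sum_{j\neq i}q_{ij}(x)(\xi_j-\xi_i)$, I would compare termwise with $\tilde q_{ij}$: for $j<i$ one has $\xi_j-\xi_i\geq 0$ and $q_{ij}(x)\leq \sup_x q_{ij}(x)=\tilde q_{ij}$, while for $j>i$ one has $\xi_j-\xi_i\leq 0$ and $q_{ij}(x)\geq\inf_x q_{ij}(x)=\tilde q_{ij}$; in both directions this yields $q_{ij}(x)(\xi_j-\xi_i)\leq \tilde q_{ij}(\xi_j-\xi_i)$. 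Summing and using $\tilde q_{ii}=-\sum_{j\neq i}\tilde q_{ij}$ gives the key comparison $\sum_{j}q_{ij}(x)\xi_j\leq(\tilde Q\xi)_i$, valid for every nonincreasing $\xi\gg 0$ and every $x\in\R_+$. Combining the two estimates,
\[
\mathscr A f(x,i)\leq x^p\Big[(G\xi)_i+\tfrac{|p|}2\veps\,\xi_i\Big],\qquad x\geq M,
\]
where $G:=\tilde Q-\tfrac p2\diag(a_1,\ldots,a_N)$.

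It remains to produce a nonincreasing $\xi\gg 0$ with $G\xi\ll 0$, and this is where $H$ enters. Writing $\xi=H\eta$ forces $\xi_i=\sum_{k\geq i}\eta_k$, so any $\eta\gg 0$ produces a strictly decreasing positive $\xi$, and $(G\xi)_i=(GH\eta)_i$. Since $-GH$ is a non-singular M-matrix, the standard equivalent condition giving the existence of a strictly positive vector mapped into the strictly positive orthant furnishes $\eta\gg 0$ with $(-GH)\eta\gg 0$, i.e. $G\xi\ll 0$. As $N<\infty$ there is $c>0$ with $(G\xi)_i\leq -c$ for all $i$, so choosing $\veps$ with $\tfrac{|p|}2\veps\max_i\xi_i<c$ yields $\mathscr A f(x,i)<0$ for all $x\geq M$ and all $i\in\S$. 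For the recurrence claim $p>0$, so $f(x,i)=\xi_i x^p\to\infty$ as $x\to\infty$, and \cite[Theorem 3.1]{Sh15a} gives positive recurrence of $(R_t,\La_t)$, hence of $(r_t,\La_t)$; for the transience claim $p<0$, so $f\to 0$ and the same criterion delivers transience.

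The main obstacle is the termwise comparison $\sum_j q_{ij}(x)\xi_j\leq(\tilde Q\xi)_i$: it is precisely the monotonicity of $\xi$ that allows the $\sup$/$\inf$ prescription in \eqref{matrix} to dominate the true rates with the correct sign in each direction, and the role of $H$ is to parametrize the cone of admissible (decreasing, positive) weights so that the drift condition collapses to the checkable algebraic hypothesis on $-GH$. A secondary point to verify is that $\tilde Q$ is a genuine conservative generator, which uses $\sup_x\sum_{j\neq i}q_{ij}(x)<\infty$ together with $N<\infty$, and that the form of \cite[Theorem 3.1]{Sh15a} invoked accepts a Lyapunov function of the product type $\xi_i x^p$ with the stated behavior at infinity.
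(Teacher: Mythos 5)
Your proposal is correct and follows essentially the same route as the paper: the paper's proof simply invokes the procedure of \cite[Theorem 2.5]{Sh15a} with $V(x)=x^p$ and $\beta_i=\frac{p}{2}(-a_i+\mathrm{sgn}(p)\veps)$, and your argument---the product Lyapunov function $\xi_i x^p$, the monotone-weight domination of $(q_{ij}(x))$ by $\tilde Q$, and the non-singular M-matrix hypothesis on $-\big(\tilde Q-(p/2)\mathrm{diag}(a_1,\ldots,a_N)\big)H$ yielding a decreasing $\xi=H\eta\gg 0$ with strictly negative drift---is exactly that procedure written out in full. The only slip is the final citation: the criterion you are reconstructing (and should invoke in the state-dependent setting) is \cite[Theorem 2.5]{Sh15a} rather than \cite[Theorem 3.1]{Sh15a}, the latter being the state-independent criterion used in Theorem \ref{t2.1}.
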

   
   \begin{proof}
     We can follow the procedure of \cite[Theorem 2.5]{Sh15a} to prove this theorem by taking $V(x)=x^p$, $\beta_i=\frac p 2(-a_i+\mathrm{sgn}(p)\veps)$ and applying the arbitrariness of $\veps$.
   \end{proof}

The recurrent property of Markov chain in finite state space and infinite state space has essential difference. Under the condition that the Makov chain is irreducible, a Markov chain in a finite state space is always recurrent, but the one in the infinite state space may be recurrent or not. The criterion provided in \cite[Theorem 3.1]{Sh15a} depends on the Perron-Frobenius theory of finite order matrix, which has no simple extension to deal with infinite order matrix. To deal with the regime-switching processes with switching in an infinite state space, J. Shao has raised two methods in \cite{Sh15a} and \cite{Sh15b}: finite dimensional partition method and principle eigenvalue method, which have been extended to deal with the stability problem in \cite{SX}. The finite partition method is based on the theory of non-negative M-matrix, and see
\cite[Theorem 2.7]{Sh15a} for more details.

Now we proceed to studying the recurrent property of $(R_t,\La_t)$ when $(\La_t)$ is a Markov chain in an infinite state space using the method of principal eigenvalue method. Suppose that $(\La_t)$ is reversible with invariant probability measure $(\mu_i)$.
Set $L^2(\mu)=\{f; \sum_{i=1}^\infty \mu_if_i^2<\infty\}$, and denote by $\|\cdot\|$ and $\la \cdot,\cdot\raa$ its associated norm and inner product.
Set
\begin{equation}\label{2.3}
D(f)=\frac 12\sum_{i,j=1}^\infty \mu_i q_{ij}(f_j-f_i)^2+\sum_{i=1}^\infty \mu_i a_i f_i^2,\quad f\in L^2(\mu),
\end{equation}
and
\begin{equation}\label{2.4}
\tilde D(f)=\frac 12\sum_{i,j=1}^\infty \mu_iq_{ij}(f_j-f_i)^2+\frac 12\sum_{i=1}^\infty \mu_i a_i f_i^2, \quad f\in L^2(\mu).
\end{equation}
Note that $D(f)$ and $\tilde D(f)$ are not necessary Dirichlet forms since $a_i$ may be negative for some $i\in\S$. The principle eigenvalues $\lambda_0$ and $\tilde \lambda_0$ corresponding to $D(f)$ and $\tilde D(f)$ respectively are defined by
\begin{equation}\label{2.5}
\lambda_0=\inf\{D(f);\,\|f\|=1\},\quad \tilde \lambda_0=\inf\{\tilde D(f);\,\|f\|=1\}.
\end{equation}

\begin{thm}\label{t2.2}
Assume (H1) holds and $N=\infty$. Suppose $(\La_t)$ is reversible and positive recurrent with invariant probability measure $(\mu_i)$.
\begin{itemize}
  \item[(i)] Suppose there exists a bounded function $(g_i)_{i\in\S}$ such that $D(g)=\lambda_0\|g\|^2$, $\liminf_{i\ra \infty} g_i\neq  0$. If $\lambda_0>0$, then $(R_t,\La_t)$ is recurrent.
  \item[(ii)] Assume that $\tilde \lambda_0>0$ and $\tilde\lambda_0$ is attainable, i.e. there exists $g\in L^2(\mu)$, $g\not\equiv 0$ so that $\tilde D(g)=\tilde \lambda_0 \|g\|^2$, then $(R_t,\La_t)$ is transient.
\end{itemize}
\end{thm}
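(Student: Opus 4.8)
The plan is to apply the principal eigenvalue method developed in \cite{Sh15a, Sh15b} and extended in \cite{SX}: I would look for a Lyapunov function of product form $F(x,i)=h(x)g_i$, where $(g_i)$ is the principal eigenfunction attached to the (non-)Dirichlet form in question, and let the eigen-relation linearize the action of $\mathscr A$. Throughout I would use that, since $(\La_t)$ is reversible, $\frac12\sum_{i,j}\mu_iq_{ij}(f_j-f_i)^2=-\la Qf,f\raa$, so that $D$ and $\tilde D$ are the quadratic forms of the operators $-Q+\mathrm{diag}(a_i)$ and $-Q+\tfrac12\mathrm{diag}(a_i)$ respectively, whose bottoms of spectrum are $\lambda_0$ and $\tilde\lambda_0$. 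I would also record that the ground state $g$ may be taken strictly positive (ground-state positivity for the positivity-preserving semigroup).

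For (i) I would take $h(x)=x^2$. The minimizer relation $D(g)=\lambda_0\|g\|^2$ gives $(Qg)_i=(a_i-\lambda_0)g_i$, and since $L^{(i)}x^2=a_ib_i-a_ix^2$ one gets the clean identity
\[ \mathscr A\big(x^2g_i\big)=g_i\big(a_ib_i-\lambda_0x^2\big). \]
The hypotheses that $g$ is bounded and $\liminf_i g_i\neq0$, together with positivity, give $0<c\le g_i\le C$, so that $F(x,i)=x^2g_i\to\infty$ uniformly in $i$, while $\lambda_0>0$ forces $\mathscr A F\le0$ once $x$ is large. Feeding this into the recurrence criterion of \cite{Sh15a} then yields recurrence.

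For (ii) I would instead take $h(x)=x$. Now $\tilde D(g)=\tilde\lambda_0\|g\|^2$ reads $(Qg)_i=(\tfrac12a_i-\tilde\lambda_0)g_i$, and using $L^{(i)}x=\frac{a_ib_i-\sigma_i^2}{2x}-\frac{a_i}2x$ the $\tfrac{a_i}2x$ drift terms cancel and I would obtain
\[ \mathscr A\big(xg_i\big)=g_i\Big(\frac{a_ib_i-\sigma_i^2}{2x}-\tilde\lambda_0 x\Big), \]
which is $\le0$ for $x$ large because $\tilde\lambda_0>0$ and $g_i>0$. The decisive difference from (i) is that here $g$ is only assumed to lie in $L^2(\mu)$ (attainability), so $g_i$ may decay to $0$ and $xg_i$ need not blow up uniformly in $i$; this non-uniformity is exactly what tips the balance from recurrence to transience. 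I would make it rigorous through the martingale $N_t=g_{\La_t}\exp\!\big(\tilde\lambda_0 t-\tfrac12\int_0^t a_{\La_s}\d s\big)$, which the relation $Qg=(\tfrac12\mathrm{diag}(a_i)-\tilde\lambda_0)g$ makes a genuine martingale, together with the transience part of the criterion in \cite{Sh15a}.

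The hard part will be the infinite state space. The two identities above are pointwise in $i$, but both criteria demand $\mathscr A F\le0$ outside a set that is compact in $\R_+\times\S$, and $a_ib_i$ (hence $a_ib_ig_i$) may be unbounded in $i$, so a single globally valid Lyapunov inequality is not available. I would handle this exactly as in the finite-exhaustion/principal eigenvalue machinery of \cite{Sh15b, SX}, approximating $\S$ by finite subsets and passing to the limit using the $L^2$-control of $g$. Verifying ground-state positivity, and the integrability needed to promote $N_t$ from a local to a true martingale in (ii), are the remaining technical points.
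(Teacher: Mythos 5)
Your part (i) is, in substance, the paper's own proof. You use the same product Lyapunov function $V(x,i)=g_i x^2$, the same ground-state positivity, and the same role for boundedness plus $\liminf_{i} g_i\neq 0$ (to make $V$ blow up uniformly in $i$). Your Euler--Lagrange relation $Qg(i)=(a_i-\lambda_0)g_i$ is the correct one, and it is what the paper's inequality \eqref{2.6} actually uses (the paper's displayed relation ``$Qg(i)=-\lambda_0 g_i$'' is a slip; your clean identity $\mathscr A(x^2g_i)=g_i(a_ib_i-\lambda_0 x^2)$ is the tidy version of \eqref{2.6}). The one real divergence is the finish: since the criterion of \cite[Theorem 3.1]{Sh15a} is a finite-state result, the paper does \emph{not} cite it for $N=\infty$; it runs the stopping-time argument by hand, showing for the hitting time $\tau$ of the compact set $\{x\le K\}\times\{1,\dots,m_0\}$ that $\p(\tau>\tau_K)\le V(x_0,\ell)/\big(K^2\inf_{i}g_i\big)$ and letting $K\ra\infty$. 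Your plan to ``feed this into the recurrence criterion of \cite{Sh15a}'' plus an unexecuted finite-exhaustion scheme leaves exactly that step open. (The uniformity-in-$i$ problem you flag, coming from possibly unbounded $a_ib_i$, is genuine, but it afflicts the paper's proof equally: its choice of $M_0$ tacitly assumes the coefficients are bounded over $\S$; it is not something that distinguishes your route from theirs.)

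Part (ii) is where your proposal has a genuine gap, and it is a gap of mechanism, not of technique. A positive function $F(x,i)=xg_i$ that is \emph{increasing} in $x$ and satisfies $\mathscr A F\le 0$ for large $x$ is the engine of \emph{recurrence} proofs; when it fails to blow up uniformly (because $g$ is only in $L^2(\mu)$), you simply lose the recurrence conclusion --- you do not gain transience. Every transience criterion, including the one the paper itself uses in Theorem \ref{t2.1}(ii) (there $h(x)=x^p$ with $p<0$) and the one in \cite[Theorem 4.2]{SX} to which the paper's omitted proof of (ii) defers, requires a positive function that is \emph{small} far away and bounded below on the compact set, so that the hitting probability of the compact set from far out is $<1$. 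You never produce such a function, and the martingale $N_t=g_{\La_t}\exp\big(\tilde\lambda_0 t-\frac12\int_0^t a_{\La_s}\d s\big)$ cannot substitute for it: if it is a true martingale it yields $\E\big[g_{\La_t}\e^{-\frac12\int_0^t a_{\La_s}\d s}\big]=g_{\La_0}\e^{-\tilde\lambda_0 t}\ra 0$, a contraction estimate pointing toward recurrence, not escape. There is also a deeper problem you could not have known blind: read literally with definition \eqref{2.4}, statement (ii) contradicts statement (i) --- take $a_i\equiv a>0$, $b_i\equiv b$, $\sigma_i\equiv\sigma$ with $ab\ge 2\sigma^2$; then $\tilde\lambda_0=a/2>0$ is attained by $g\equiv 1\in L^2(\mu)$, yet $(r_t)$ is an ordinary positive recurrent CIR process. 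The consistent version (matching \cite{SX}) carries the opposite sign, $\tilde D(f)=\frac12\sum\mu_iq_{ij}(f_j-f_i)^2-\frac12\sum\mu_ia_if_i^2$, which forces $\sum_i\mu_ia_i<0$; the eigen-relation becomes $Qg(i)=-\big(\frac{a_i}{2}+\tilde\lambda_0\big)g_i$ and pairs with the \emph{decreasing} function $h(x)=1/x$, giving $\mathscr A\big(g_i/x\big)=g_i\big(-\tilde\lambda_0/x+(3\sigma_i^2-a_ib_i)/(2x^3)\big)\le 0$ for large $x$, from which transience follows by the standard escape estimate. Under either sign convention, your choice $h(x)=x$ is the wrong shape for proving transience.
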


\begin{proof}
  (i)\ The idea of this argument comes from \cite[Theorem 4.2]{SX}. By variational method (cf. \cite[Theorem 3.2]{SX}), it holds that $Qg(i)=-\lambda_0 g_i$, and $g_i>0$ for each $i\in \S$. Let
  $V(x,i)=g_i x^2$, then there exists $M_0>0$ such that $\dis \frac{\sigma_i^4+a_ib_i-\sigma_i^2}{x^2}<\frac{\lambda_0}{2}$ for any $x>M_0$. Moreover,
  \begin{equation}\label{2.6}
  \begin{split}
    \mathscr A V(x,i)&\leq \Big( Qg(i) -a_ig_i+\frac{\sigma_i^4+a_ib_i-\sigma_i^2}{x^2} g_i\Big) x^2\\
    &\leq (-\lambda_0+\frac{\lambda_0}{2}) g_i x^2=-\frac{\lambda_0}{2} V(x,i)<0.
  \end{split}
  \end{equation}
  To study the recurrence of $(R_t,\La_t)$, we only need to consider the situation $R_0=x_0>M_0$.
  Set
  \[\tau=\inf\{t>0; (R_t,\La_t)\in \{x:\,x\leq K\}\times \{1,\ldots,m_0\}\},\]
  where constant $K$ satisfies $x_0>K>M_0$ and $m_0\in \N$ satisfies $\La_0=\ell>m_0$. Put
  \[\tau_K=\inf\{ t>0; R_t\geq K\}.\]
  Applying It\^o's formula to $(R_t,\La_t)$, by \eqref{2.6}, we get
  \begin{align*}
    \E V(R_{t\wedge \tau\wedge \tau_K},\La_{t\wedge \tau\wedge \tau_K})= V(x_0,\ell)+\E\int_0^{t\wedge \tau\wedge \tau_K}\mathscr A V(R_s,\La_s)\d s\leq V(x_0,\ell),
  \end{align*}
  which yields
  \[\p(\tau>\tau_K)\leq \frac{V(x_0,\ell)}{K^2\inf_{i\in\S} g_i}.\]
  Letting $K\ra \infty$, it follows immediately from $\lim_{K\ra\infty}\tau_K= \infty$ a.e. that
  $\p(\tau=\infty)=0$. Consequently, $\p(\tau<\infty)=1$ and $(R_t,\La_t)$ is recurrent.

  The proof of (ii) is similar to that of \cite[Theorem 4.2]{SX}, and hence is omitted.
\end{proof}

As an application of Theorem \ref{t2.1}, we go to present a result on the long term returns of interest rates, which develops the corresponding results of Deelstra and Delbaen \cite{DD} on an extended CIR model without regime-switching. Following \cite{DD}, the work \cite{ZTH} studied extended CIR model with regime-switching under a strong condition. Restricted to classical CIR model \eqref{1.1}, their condition means that $a_i>0$ for all $i\in\S$, i.e. the corresponding CIR process in each fixed environment $i$ is recurrent. Especially, under the conditions of \cite{ZTH}, there is no heavy-tailed phenomenon appeared for the stationary distribution. Our method and corresponding results can be easily used to study extended CIR process considered in \cite{ZTH}. Under the existence of stationary distribution given in Theorem \ref{t2.1}, according to the strong ergodicity theorem (see, for instance, \cite[Theorem 4.4]{YZ}), the following assertion holds.
\begin{cor}\label{t2.3}
Assume that $(r_t,\La_t)$ is positive recurrent. Then for any measurable function $f$ on $\R_+\times \S$ such that
\[\sum_{i\in\S} \int_{\R_+} |f(x,i)|\pi(\d x,i)<\infty,\] it holds
\begin{equation}\label{2.7}
\lim_{t\ra\infty}\frac 1t\int_0^t f(r_s,\La_s)\d s=\sum_{i\in \S} \int_{\R_+} f(x,i)\pi(\d x,i),
\end{equation} where $\pi$ denotes the stationary distribution of $(r_t,\La_t)$.
\end{cor}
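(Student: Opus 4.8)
The plan is to read \eqref{2.7} as the strong law of large numbers (ergodic theorem) for the positive recurrent Markov process $(r_t,\La_t)$ and to obtain it as a direct application of a strong ergodic theorem such as \cite[Theorem 4.4]{YZ}; the only genuine work is to check that the hypotheses of that theorem are met and to locate where the integrability assumption on $f$ enters. First I would record that, because $(r_t,\La_t)$ is positive recurrent and the chain $(\La_t)$ is irreducible while the diffusion \eqref{2.1} is non-degenerate, the process is positive Harris recurrent and admits a \emph{unique} invariant probability measure, which is exactly the stationary distribution $\pi$. Uniqueness is what makes the right-hand side of \eqref{2.7} well defined and independent of the (arbitrary) initial condition $r_0=x>0$, $\La_0=\ell$, and it guarantees that the limiting occupation measure is $\pi$ rather than some other invariant object.

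The mechanism behind the cited theorem, which I would invoke rather than reprove, is a regeneration argument. Positive recurrence provides a reference set of the form $C=\{x\le K\}\times\{1,\ldots,m_0\}$ (of the kind appearing in the proof of Theorem \ref{t2.2}) that is reached infinitely often, and the successive return times $\tau_1<\tau_2<\cdots$ to $C$ cut the trajectory into cycles. Up to the standard splitting used to manufacture a genuine regeneration point, the cycle integrals $Z_n=\int_{\tau_n}^{\tau_{n+1}} f(r_s,\La_s)\,\d s$ and the cycle lengths $\tau_{n+1}-\tau_n$ form i.i.d.\ sequences, and the cycle formula for the invariant measure gives
\[
\E|Z_1|=\E\!\int_{\tau_1}^{\tau_2}\!|f(r_s,\La_s)|\,\d s=\E(\tau_2-\tau_1)\sum_{i\in\S}\int_{\R_+}|f(x,i)|\,\pi(\d x,i).
\]
Here the hypothesis $\sum_{i\in\S}\int_{\R_+}|f(x,i)|\,\pi(\d x,i)<\infty$ is used precisely to ensure $\E|Z_1|<\infty$, so that the strong law of large numbers applies to $(Z_n)$. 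Dividing $\frac1N\sum_{n=1}^N Z_n$ by $\frac1N\sum_{n=1}^N(\tau_{n+1}-\tau_n)$ and converting the cycle count into clock time by the renewal theorem then yields
\[
\lim_{t\ra\infty}\frac1t\int_0^t f(r_s,\La_s)\,\d s=\frac{\E Z_1}{\E(\tau_2-\tau_1)}=\sum_{i\in\S}\int_{\R_+}f(x,i)\,\pi(\d x,i)\qquad\text{a.s.}
\]

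The step I expect to require the most care — and the reason one routes the proof through the cycle decomposition rather than a naive truncation — is the passage to unbounded $f\in L^1(\pi)$. A bounded-function ergodic theorem combined with truncation $f_n=(-n)\vee f\wedge n$ readily delivers the lower bound $\liminf_{t\ra\infty}\frac1t\int_0^t f\,\d s\ge\sum_{i\in\S}\int_{\R_+}f_n\,\d\pi$ for each $n$, hence the correct lower bound after $n\to\infty$ by monotone convergence; but the matching upper bound is not automatic, since it demands that the excursions of the process into the region where $f$ is large contribute a vanishing share of the time average. The cycle SLLN circumvents this difficulty because the finiteness of the single cycle mean $\E|Z_1|$, guaranteed by the $L^1(\pi)$ hypothesis, directly controls all of $f$ and not merely its truncation. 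A final, purely bookkeeping point is that the exceptional null set may be taken independent of any truncation level, since only countably many full-measure events (indexed by $n\in\N$) need be intersected, so that \eqref{2.7} holds on a single set of full probability.
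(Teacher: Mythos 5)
Your proposal is correct and follows essentially the same route as the paper: the paper's entire proof consists of invoking the strong ergodicity theorem \cite[Theorem 4.4]{YZ} under the positive recurrence established in Theorem \ref{t2.1}, exactly as you do. Your additional sketch of the regeneration-cycle mechanism and the role of the $L^1(\pi)$ hypothesis is a sound explanation of why the cited theorem works, but it is not a different argument.
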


It is well known that CIR process is closely related to the Bessel process
(cf. \cite[Chapter 6]{Yor}). In the following, we are also interested to establish a connection between CIR process with regime-switching and squared Bessel process with regime-switching.
\begin{prop}\label{t2.4}
Let $(r_t,\La_t)$ be the solution of \eqref{1.1}, then
\begin{equation}\label{2.8}
(r_t)_{t\geq 0}=\Big(\frac{1}{\ell(t)}\rho\Big(\int_0^t\ell(s)\d s\Big)\Big)_{t\geq 0},
\end{equation}
in the sense that both sides admit the same distribution, where $\ell(t)=\exp\big(\int_0^t a_{\La_u}\d u\big)$, and $(\rho_t)$ is a squared Bessel process with regime-switching satisfying the SDE:
\begin{equation}\label{2.9}
\d \rho_t=a_{\La(A_t)} b_{\La(A_t)}\d t+\sigma_{\La(A_t)} \sqrt{\rho_t}\d W_t,
\end{equation} where
$A_t=\inf\{u:\int_0^u\ell(s)\d s=t\}$.
\end{prop}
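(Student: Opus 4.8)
The plan is to convert the CIR equation \eqref{1.1} into a squared Bessel equation through a multiplicative rescaling in space followed by a random time change, both governed by the positive multiplicative functional $\ell(t)=\exp(\int_0^t a_{\La_u}\d u)$. To this end I would first set $u_t:=\ell(t)r_t$. Along any fixed trajectory of $(\La_t)$ the map $t\mapsto\int_0^t a_{\La_u}\d u$ is continuous and of bounded variation, so $\ell$ is a continuous finite-variation process with $\d\ell(t)=a_{\La_t}\ell(t)\d t$ and no quadratic variation. Itô's product rule together with \eqref{1.1} gives
\begin{equation*}
\d u_t=\ell(t)a_{\La_t}b_{\La_t}\d t-\ell(t)a_{\La_t}r_t\d t+2\sigma_{\La_t}\ell(t)\sqrt{r_t}\,\d B_t+a_{\La_t}\ell(t)r_t\,\d t.
\end{equation*}
The decisive observation is that the mean-reverting term $-\ell(t)a_{\La_t}r_t\d t$ cancels exactly against $a_{\La_t}\ell(t)r_t\d t$; substituting $\sqrt{r_t}=\sqrt{u_t/\ell(t)}$ then leaves the driftless-in-$u$ (squared Bessel type) equation
\begin{equation*}
\d u_t=\ell(t)a_{\La_t}b_{\La_t}\d t+2\sigma_{\La_t}\sqrt{\ell(t)}\,\sqrt{u_t}\,\d B_t.
\end{equation*}

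Next I would carry out the time change determined by $\tau(t):=\int_0^t\ell(s)\d s$, whose inverse is exactly $A_t$; since $\ell>0$, $\tau$ is strictly increasing and continuous, so $A_t$ is well defined and $\tau(A_s)=s$. Putting $\rho_s:=u_{A_s}$, the change of variable $w=\tau(v)$ (for which $\d w=\ell(v)\d v$) transforms the drift integral $\int_0^{A_s}\ell(v)a_{\La_v}b_{\La_v}\d v$ into $\int_0^s a_{\La(A_w)}b_{\La(A_w)}\d w$, matching the drift of \eqref{2.9}. For the martingale part $M_t=\int_0^t2\sigma_{\La_s}\sqrt{\ell(s)u_s}\,\d B_s$ I would compute
\begin{equation*}
\langle M\rangle_{A_s}=\int_0^{A_s}4\sigma_{\La_v}^2\ell(v)u_v\,\d v=\int_0^s4\sigma_{\La(A_w)}^2\rho_w\,\d w.
\end{equation*}
Conditioning on the whole path of $(\La_t)$, which is legitimate because $(\La_t)$ and $(B_t)$ are independent, renders $\tau$ deterministic, so the time-change theorem for continuous local martingales (Lévy's characterization) furnishes a Brownian motion $W$ with $M_{A_s}=\int_0^s2\sigma_{\La(A_w)}\sqrt{\rho_w}\,\d W_w$. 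Consequently $\rho_s$ solves a squared Bessel equation with regime-switching of the form \eqref{2.9}, and unwinding the substitutions yields $r_t=u_t/\ell(t)=\ell(t)^{-1}\rho(\tau(t))$, which is \eqref{2.8}; the equality is in distribution because $W$ is produced through the representation theorem.

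The step I expect to be the main obstacle is the rigorous handling of the time change in the presence of switching: one must verify that the time-changed chain $s\mapsto\La(A_s)$ together with the constructed $W$ genuinely realizes a squared Bessel process with regime-switching in the sense of \eqref{2.9}, and that conditioning on $(\La_t)$ truly reduces the martingale time change to the deterministic case so that Lévy's theorem applies and $W$ is a bona fide Brownian motion. A secondary technical point is to check that $A_t$ is finite on the range of $\tau$ needed for each fixed $t$ and that the quadratic-variation change of variables is valid up to the relevant a.s. finite stopping times; both are routine once the independence of $(\La_t)$ and $(B_t)$ is exploited.
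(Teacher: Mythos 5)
Your proposal is correct and takes essentially the same route as the paper: your substitution $u_t=\ell(t)r_t$ is exactly the paper's $Z_t$, the time change by $A$ is identical, and both arguments obtain $W$ from the representation (Dambis--Dubins--Schwarz) theorem applied to the time-changed local martingale, the only inessential difference being that the paper evaluates the drift integral by decomposing along the jump times of $(\La_t)$ while you use a direct change of variables. One remark: your diffusion coefficient $2\sigma_{\La(A_t)}\sqrt{\rho_t}$ is the honest consequence of \eqref{1.1}, so the missing factor $2$ in \eqref{2.9} (and in the paper's intermediate equation \eqref{2.10}) is an inconsistency in the paper itself, not a flaw in your computation.
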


\begin{proof}
  We follow the idea of \cite[Theorem 6.3.5.1]{Yor} to prove this proposition.
  Set $C(t)=\int_0^t \ell(s)\d s$, $t>0$, then $t\mapsto C(t)$ is a strictly increasing process and so its inverse $A(t)=\inf\{u: C(u)=t\}$ is well defined.

  Let $Z_t=r_t\ell(t)$, then It\^o's formula yields that
  \begin{equation}\label{2.10}
    \d Z_t= a_{\La_t}b_{\La_t}\ell(t)\d t+ \sigma_{\La_t}\sqrt{\ell(t)}\sqrt{Z_t}\d B_t.
  \end{equation}
  Set $0=\tau_0<\tau_1<\ldots<\tau_k<\ldots$ be the sequence of jumping times of the Markov chain $(\La_t)$. Put $\zeta_k=C(\tau_k)$, $k\geq 0$, then $A_{\zeta_k}=\tau_k$ and there is no jumps for $(\La_t)$ in the interval $[A_{\zeta_k},A_{\zeta_{k+1}})$.
  Set $n(t)=\inf\{k;\,\zeta_k\leq t<\zeta_{k+1}\}$ for $t\geq 0$, and for the convenience of notation, denote by $\zeta_{n(t)+1}=t$.

  By \eqref{2.10},
  \begin{align*}
    Z_{A(\zeta_{k+1})}&=Z_{A(\zeta_k)}+\int_{A(\zeta_{k})}^{A(\zeta_{k+1})} a_{\La_s}b_{\La_s}\ell(s)\d s +\int_{A(\zeta_{k})}^{A(\zeta_{k+1})} \sigma_{\La_s}\sqrt{\ell(s)Z_s}\d B_s\\
    &=Z_{A(\zeta_k)}+a_{\La_{A(\zeta_k)}}b_{\La_{A(\zeta_k)}}
    (\zeta_{k+1}-\zeta_k) +\sigma_{\La_{A(\zeta_k)}}\int_{ A(\zeta_{k})}^{A(\zeta_{k+1})}
    \sqrt{\ell(s)Z_s}\d B_s.
  \end{align*}
   Note that $\int_0^t\sqrt{\ell(s)Z_s}\d B_s$ is a local martingale with quadratic variation $\int_0^{A(t)}\ell(s)Z_s\d s=\int_0^tZ_{A(u)}\d u$. Thus, there is a Brownian motion $(W_t)$ such that
   \[\int_0^{A(t)}\sqrt{\ell(s)Z_s}\d B_s=\int_0^t \sqrt{Z_{A(s)}}\d W_s.\]
   Consequently,
   \begin{align*}
     Z_{A(t)}&=Z_{A(0)}+\sum_{k=0}^{n(t)}\big(Z_{A(\zeta_{k+1})}-Z_{A(\zeta_k)}\big)\\
     &=Z_{A(0)}+\sum_{k=0}^{n(t)}\int_{\zeta_k}^{\zeta_{k+1}} a_{\La_{A(s)}}b_{\La_{A(s)}}\d s+\sum_{k=0}^{n(t)}\int_{\zeta_k}^{\zeta_{k+1}}\sigma_{\La_{A(s)}}\sqrt{Z_{A(s)}}\d W_s\\
     &=Z_{A(0)}+\int_0^ta_{\La_{A(s)}}b_{\La_{A(s)}}\d s+\int_0^t\sigma_{\La_{A(s)}}\sqrt{Z_{A(s)}}\d W_s.
   \end{align*}
   Let $\rho_t=Z_{A(t)}$, then $(\rho_t)_{t\geq 0}$ satisfies the SDE \eqref{2.9} and $r_t=\frac{1}{\ell(t)} Z_t=\frac{1}{\ell(t)} \rho_{C(t)}$, which is the desired result.
\end{proof}

\section{Long time behavior of CIR process with switching}

In this section, we only consider the CIR process with switching in a finite state space, i.e. $N<\infty$. We focus on difference of the tail behavior of the stationary distribution if it exists caused by the existence of regime-switching. Our result reveals from the theoretical point of view the existence of regime-switching in interest rates as shown in \cite{ABa,ABb,BD} by empirical evidence.

Denote $Q_p$ the matrix $Q-p\mathrm{diag}(a_1,\ldots,a_N)$, where $\mathrm{diag}(a_1,\ldots,a_N)$ stands for the diagonal matrix with diagonal $(a_1,\ldots,a_N)$. Set
\[\eta_p=-\max_{\gamma\in \mathrm{spec}(Q_p)} \mathrm{Re}\, \gamma,\]
where $\mathrm{spec}(Q_p)$ denotes the spectrum of operator $Q_p$.
According to Propositions 4.1 and 4.2 in \cite{Bar}, it holds:
\begin{lem}\label{t3.1}
(i)\ For any $p>0$, there exist $0<C_1(p)<C_2(p)<\infty$ such that, for any initial distribution $\nu_0$ on $\S$, any $t>0$,
\[C_1(p)\e^{-\eta_p t}\leq \E_{\nu_0} \Big[\e^{-\int_0^t p a_{\La_s} \d s}\Big]\leq C_2(p)\e^{-\eta_pt}.\]\\
(ii)\ Set $a_{\min}=\min\{a_i; i\in \S\}$. If $a_{\min}\geq 0$, then $\eta_p>0$ for all $p>0$; if $a_{\min}<0$, there exists $\kappa\in (0, \min\{ -q_i/a_i; a_i<0\})$ such that $\eta_p>0$ for $p<\kappa$ and $\eta_p<0$ for $p>\kappa$.
\end{lem}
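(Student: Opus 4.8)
The plan is to reduce both parts to the Perron--Frobenius analysis of the finite matrix $Q_p$ via the Feynman--Kac formula. First I would set $u_i(t)=\E_i\big[\e^{-\int_0^t p a_{\La_s}\d s}\big]$ and observe, from Kolmogorov's backward equation, that $u(t)=(u_i(t))_i$ solves $\tfrac{\d}{\d t}u(t)=Q_pu(t)$ with $u(0)=\mathbf 1$, so that $u(t)=\e^{tQ_p}\mathbf 1$ and $\E_{\nu_0}\big[\e^{-\int_0^t p a_{\La_s}\d s}\big]=\la \nu_0,\e^{tQ_p}\mathbf 1\raa$. Since the off-diagonal entries of $Q_p$ coincide with those of $Q$, the matrix $Q_p$ is irreducible with nonnegative off-diagonal entries; applying Perron--Frobenius to $Q_p+cI\geq0$ for $c$ large shows that $\theta(p):=\max_{\gamma\in\mathrm{spec}(Q_p)}\mathrm{Re}\,\gamma=-\eta_p$ is a real, simple eigenvalue, strictly separated from the rest of the spectrum, with strictly positive right and left eigenvectors $v_p,w_p$.

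For (i) I would pass to the shifted semigroup $\tilde u(t):=\e^{\eta_p t}u(t)=\e^{t(Q_p+\eta_p I)}\mathbf 1$. The matrix $Q_p+\eta_p I$ is again irreducible with nonnegative off-diagonal entries and principal eigenvalue $0$, so $\e^{t(Q_p+\eta_p I)}$ converges as $t\ra\infty$ to the spectral projection $P$ onto the one-dimensional top eigenspace, a matrix with strictly positive entries. Hence $\tilde u(t)\ra P\mathbf 1$, a vector with strictly positive entries, while each $\tilde u_i(t)=\e^{\eta_p t}\E_i[\,\cdot\,]$ is continuous and strictly positive on $[0,\infty)$. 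A continuous strictly positive function on $[0,\infty)$ with a strictly positive limit is bounded between two positive constants, so there exist $0<C_1(p)\leq C_2(p)<\infty$ with $C_1(p)\leq\tilde u_i(t)\leq C_2(p)$ for all $i$ and all $t\geq0$. Since $\nu_0$ is a probability vector, $\E_{\nu_0}[\,\cdot\,]=\e^{-\eta_p t}\la\nu_0,\tilde u(t)\raa$ then lies in $[C_1(p)\e^{-\eta_p t},C_2(p)\e^{-\eta_p t}]$ uniformly in $\nu_0$ and $t$, which is exactly the claim in (i). This shift trick is what secures the bound for small $t$, not merely asymptotically.

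For (ii) I would study the sign of $\theta(p)$. From $\theta(p)=\lim_{t\ra\infty}\tfrac1t\log\E_i\big[\e^{-p\int_0^t a_{\La_s}\d s}\big]$, H\"older's inequality shows $p\mapsto\log\E_i[\,\cdot\,]$ is convex for each $t$, hence $\theta$ is convex. Two boundary computations fix its shape: $\theta(0)=0$ (since $Q\mathbf 1=0$), and first-order eigenvalue perturbation with eigenvectors $\mathbf 1$ and $\mu$ gives $\theta'(0)=-\sum_i\mu_ia_i$, which is negative under the standing assumption $\sum_i\mu_ia_i>0$ of Section~3. When $a_{\min}\geq0$ I would use the general derivative formula $\theta'(p)=-\la w_p,\diag(a_1,\ldots,a_N)v_p\raa/\la w_p,v_p\raa$; strict positivity of $v_p,w_p$ forces $\theta'(p)<0$ for all $p$, so $\theta(p)<\theta(0)=0$ and $\eta_p>0$ for every $p>0$. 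When $a_{\min}<0$, the lower bound $\theta(p)\geq\max_i(q_{ii}-pa_i)$ (valid since the spectral radius of the nonnegative matrix $Q_p+cI$ is at least any of its diagonal entries) shows $\theta(p)\ra+\infty$; convexity together with $\theta(0)=0$ and $\theta'(0)<0$ then yields a single zero $\kappa>0$, with $\theta<0$ (so $\eta_p>0$) on $(0,\kappa)$ and $\theta>0$ (so $\eta_p<0$) on $(\kappa,\infty)$.

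Finally, to bound $\kappa$ I would evaluate at $p_0=\min\{-q_i/a_i; a_i<0\}$: at the minimizing state $i^*$ the diagonal entry $q_{i^*i^*}-p_0a_{i^*}$ vanishes, and irreducibility upgrades the soft bound $\theta(p_0)\geq0$ to the strict inequality $\theta(p_0)>0$, since in row $i^*$ the eigenvalue relation for the strictly positive eigenvector $v_{p_0}$ receives a strictly positive contribution from some off-diagonal neighbor. By the single-crossing structure this gives $\kappa<p_0$. The step I expect to be most delicate is precisely this last one: converting the diagonal lower bound into the \emph{strict} inequality $\theta(p_0)>0$, and thereby the strict upper bound on $\kappa$, while making sure the single sign change of $\eta_p$ is deduced cleanly from convexity (since $\theta$ is not monotone, monotonicity arguments alone will not suffice).
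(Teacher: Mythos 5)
The paper does not actually prove this lemma: it is quoted directly from Propositions 4.1 and 4.2 of Bardet--Gu\'erin--Malrieu \cite{Bar}, so there is no internal proof to compare against. Your argument is correct and is, in substance, the argument behind those propositions: the Feynman--Kac identity $\E_{\nu_0}\big[\e^{-p\int_0^t a_{\La_s}\d s}\big]=\la \nu_0,\e^{tQ_p}\mathbf{1}\raa$, Perron--Frobenius for the irreducible matrix $Q_p$ with nonnegative off-diagonal entries (so that $-\eta_p$ is a real, simple, strictly dominant eigenvalue with strictly positive left/right eigenvectors), the spectral decomposition of $\e^{t(Q_p+\eta_p I)}$ to get the two-sided bound in (i) uniformly in $t$ and $\nu_0$, and for (ii) convexity of $\theta(p)=-\eta_p$ together with $\theta(0)=0$, $\theta'(0)=-\sum_i\mu_i a_i$ and the diagonal lower bound $\theta(p)\geq\max_i(q_{ii}-pa_i)$. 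Two points deserve record. First, as you note, part (ii) is simply false without the standing assumption $\sum_i\mu_i a_i>0$ of Section 3 (if all $a_i=0$ then $\eta_p\equiv 0$; if $\sum_i\mu_i a_i\leq 0$ then convexity forces $\eta_p\leq 0$ for all $p>0$, so no positive $\kappa$ exists); the paper leaves this hypothesis implicit in the lemma, and your proof makes the dependence explicit. Second, your treatment of the strict bound $\kappa<p_0:=\min\{-q_i/a_i;\,a_i<0\}$ is exactly the needed refinement: the soft estimate (e.g.\ restricting to the event of no jump before time $t$, which gives $\E_{i^*}\big[\e^{-p_0\int_0^t a_{\La_s}\d s}\big]\geq 1$) only yields $\theta(p_0)\geq 0$, i.e.\ $\kappa\leq p_0$, whereas reading off row $i^*$ of the eigenvalue equation, where the diagonal entry of $Q_{p_0}$ vanishes, gives $\theta(p_0)v_{i^*}=\sum_{j\neq i^*}q_{i^*j}v_j>0$ by irreducibility and positivity of $v$, hence the strict inclusion $\kappa<p_0$ claimed in the statement. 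Your proof is therefore complete and self-contained, and on this last point slightly more careful than a bare citation would suggest.
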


\begin{thm}\label{t3.2}
Assume that (H1) holds, $\sum_{i\in \S} \mu_i a_i>0$, and $N<\infty$. Let $\pi$ be the stationary distribution of $(r_t,\La_t)$ on $\R_+\times \S$. Set
\begin{equation}\label{kappa}\kappa=\sup\{p>0; \eta_p>0\}\in(0,\infty].
\end{equation}
Assume $\kappa>1$. Then
\begin{itemize}
  \item[(i)] If $a_{\min}>0$, then for some $\delta>0$,
  $\dis \int_{\R_+} \e^{\delta y}\pi(\d y, \S)<\infty$.
  \item[(ii)] If $a_{\min}<0$, then the $p^{th}$ moment of $\pi$ is finite if and only if $0<p<\kappa$.
\end{itemize}
\end{thm}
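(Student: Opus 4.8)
The plan is to reduce both statements to a Foster--Lyapunov drift analysis driven by the action of the generator of $(r_t,\La_t)$ on power and exponential test functions, and to extract the sharp divergence of moments from the exact stationary identity. Writing $\mathcal A$ for the generator of $(r_t,\La_t)$, i.e. $\mathcal A f(x,i)=a_i(b_i-x)\partial_xf+2\sigma_i^2x\,\partial_x^2f+\sum_{j}q_{ij}f(x,j)$ (equivalently the generator \eqref{2.2} read through $r=R^2$), the single computation that governs everything is
\[
\mathcal A x^p(x,i)=-pa_ix^p+p\big(a_ib_i+2(p-1)\sigma_i^2\big)x^{p-1}.
\]
Its leading coefficient $-pa_i$ is exactly the diagonal perturbation defining $Q_p=Q-p\,\diag(a_1,\ldots,a_N)$. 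Since $Q_p$ has non-negative off-diagonal entries and $Q$ is irreducible, Perron--Frobenius theory yields a strictly positive right eigenvector $\xi\gg0$ with $Q_p\xi=-\eta_p\xi$, $\eta_p$ being the spectral quantity of Lemma \ref{t3.1}. Taking the weighted function $V(x,i)=\xi_ix^p$ then gives
\[
\mathcal A V(x,i)=-\eta_p\,\xi_ix^p+p\,\xi_i\big(a_ib_i+2(p-1)\sigma_i^2\big)x^{p-1},
\]
so the sign of $\eta_p$, equivalently the location of $p$ relative to $\kappa$ through Lemma \ref{t3.1}(ii), decides whether the top-order term is dissipative or explosive, the remaining term being of strictly smaller order in $x$ as $x\to\infty$.

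For part (i) I would instead use the exponential weight $V(x,i)=\e^{\delta x}$, for which $\mathcal A V(x,i)=\e^{\delta x}\big[\delta a_ib_i-\delta(a_i-2\delta\sigma_i^2)x\big]$. When $a_{\min}>0$ one fixes $\delta>0$ small enough that $a_i-2\delta\sigma_i^2\ge a_{\min}/2>0$ for every $i$; then $\mathcal A V\le -cV$ outside a compact set for some $c>0$, a drift inequality of the form $\mathcal A V\le -cV+d\,\mathbf 1_K$. Combined with the positive recurrence from Theorem \ref{t2.1}, a standard localization (apply It\^o to $V(r_{t\wedge\tau_n},\La_{t\wedge\tau_n})$, take expectations, and pass to the limit via Fatou and stationarity) gives $\int_{\R_+}\e^{\delta y}\pi(\d y,\S)<\infty$.

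For the finiteness half of part (ii) I would run the same localization with $V(x,i)=\xi_ix^p$. For $p\in(1,\kappa)$ Lemma \ref{t3.1}(ii) gives $\eta_p>0$, and since $x^{p-1}=o(x^p)$ as $x\to\infty$ the lower-order term is eventually absorbed, so $\mathcal A V\le-\tfrac{\eta_p}{2}V$ outside a compact set; this yields $\sum_{i\in\S}\xi_i\int_{\R_+}y^p\pi(\d y,i)<\infty$, and as $\xi$ is bounded below, $\int_{\R_+}y^p\pi(\d y,\S)<\infty$. The restriction $p>1$ costs nothing: since $\kappa>1$, every $p\in(0,1]$ is handled by domination, $\int_{\R_+}y^p\pi(\d y,\S)\le1+\int_{\R_+}y^{p'}\pi(\d y,\S)$ for any fixed $p'\in(1,\kappa)$.

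The delicate half is the divergence of the $p$-th moment for $p\ge\kappa$, and the cleanest route is to argue at the single critical exponent $p=\kappa$ (finite because $a_{\min}<0$, by Lemma \ref{t3.1}(ii)) and then propagate upward. By continuity of $p\mapsto\eta_p$ one has $\eta_\kappa=0$, hence $Q_\kappa\xi=0$ with $\xi\gg0$. Suppose for contradiction that $\int_{\R_+}y^\kappa\pi(\d y,\S)<\infty$; note $\int_{\R_+}y^{\kappa-1}\pi(\d y,\S)<\infty$ already holds since $\kappa-1\in(0,\kappa)$. Inserting $f(x,i)=\xi_ix^\kappa$ into the stationary identity $\int\mathcal Af\,\d\pi=0$ and using $\eta_\kappa=0$ annihilates the top-order term and leaves
\[
\kappa\sum_{i\in\S}\xi_i\big(a_ib_i+2(\kappa-1)\sigma_i^2\big)\int_{\R_+}x^{\kappa-1}\pi(\d x,i)=0.
\]
But $\xi_i>0$ and, by (H1) together with $\kappa>1$, $a_ib_i+2(\kappa-1)\sigma_i^2\ge2\sigma_i^2+2(\kappa-1)\sigma_i^2=2\kappa\sigma_i^2>0$, so every summand is non-negative and, as $\pi$ charges $(0,\infty)$, the sum is strictly positive --- a contradiction. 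Hence $\int_{\R_+}y^\kappa\pi(\d y,\S)=\infty$, and since $y^p\ge y^\kappa$ for $y\ge1$, the $p$-th moment is infinite for every $p\ge\kappa$. I expect the main obstacle to be the rigorous justification of the identity $\int\mathcal Af\,\d\pi=0$ for the unbounded test function $f=\xi_{\cdot}x^\kappa$: this requires a truncation/stopping-time argument controlling the contributions near the boundary $0$ and near infinity, legitimized precisely by the a priori finiteness of the $\kappa$-th moment (the contradiction hypothesis) and of the $(\kappa-1)$-th moment (already established). The same localization underlies the drift-condition conclusions of part (i) and of the finiteness half of (ii).
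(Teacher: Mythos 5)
Your proposal is correct in substance but follows a genuinely different route from the paper. The paper never forms the spatially weighted functions $\xi_i x^p$: instead it conditions on the whole path of the chain, so that $\alpha_p(t)=\E[r_t^p\,|\,\mathscr F_T^{\La}]$ satisfies a scalar ODE, bounds (or solves) this ODE pathwise, and then averages over the chain using the two-sided estimate $C_1(p)\e^{-\eta_p t}\leq \E[\e^{-p\int_0^t a_{\La_s}\d s}]\leq C_2(p)\e^{-\eta_p t}$ of Lemma \ref{t3.1}, quoted from \cite{Bar}. Your Perron--Frobenius eigenvector $\xi\gg 0$ with $Q_p\xi=-\eta_p\xi$ is exactly the mechanism underlying that lemma, so the two arguments are dual: you absorb the chain modulation into a spatial weight and run a drift-condition/localization argument, while the paper absorbs it into a time integral and runs Gronwall plus the weak-convergence/Fatou step \eqref{3.5}. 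For the divergence at $p=\kappa$ the difference is most pronounced: the paper starts the process from $\pi$, keeps the explicit solution formula of the conditional ODE, and uses $\E[\e^{-\kappa\int_u^t a_{\La_s}\d s}]\geq C_1(\kappa)$ (valid since $\eta_\kappa=0$) to force $\int_{\R_+} y^\kappa\pi(\d y,\S)\geq c\,t\to\infty$; you instead feed $f(x,i)=\xi_i x^\kappa$ into the stationary identity $\int \mathscr A f\,\d \pi=0$, where $\eta_\kappa=0$ annihilates the top-order term and leaves a strictly positive remainder. Your version is arguably cleaner, and both hinge on the same fact $\eta_\kappa=0$ obtained from continuity of $p\mapsto\eta_p$.

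The one place where your sketch needs a genuine supplement is the step you yourself flag: the identity $\int\mathscr A f\,\d\pi=0$ for the unbounded $f=\xi_i x^\kappa$. Finiteness of the $\kappa$-th and $(\kappa-1)$-th moments of $\pi$ at fixed times is not by itself enough to pass to the limit in the stopped Dynkin formula: on the event $\{\tau_n\leq t\}$ the stopped value is of order $n^\kappa$, so one needs $n^\kappa\,\p_\pi\big(\sup_{s\leq t} r_s\geq n\big)\to 0$, i.e. a maximal bound, not a fixed-time bound. This is obtainable: writing $r_s=r_0+\int_0^s a_{\La_u}(b_{\La_u}-r_u)\d u+M_s$ and applying the Burkholder--Davis--Gundy inequality to the local martingale $M$, the contradiction hypothesis $\int_{\R_+}y^\kappa\pi(\d y,\S)<\infty$ together with $\kappa>1$ yields $\E_\pi\big[\sup_{s\leq t} r_s^\kappa\big]<\infty$; then dominated convergence handles the stopped boundary term, and since $\mathscr A f\geq 0$ monotone convergence handles the time integral, closing your argument. (The paper sidesteps this by conditioning on the chain, though its own passage from \eqref{3.1} to the conditional ODE involves a similar, unremarked, localization.) So there is no wrong step, but your phrase ``truncation legitimized by the moment hypotheses'' should be spelled out as this maximal estimate; as stated it slightly understates what has to be proved.
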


\begin{proof}
(i) Set $\alpha=\max_{i\in \S} \sigma_i^2/a_i$.
  For $0<\delta<1/(2\alpha)$,
  \begin{equation}\label{3.6}
  \d \e^{\delta r_t}= \delta \e^{\delta r_t}\Big[a_{\La_t}b_{\La_t}-a_{\La_t}\big(1-2\delta\frac{\sigma_{\La_t}^2}{a_{\La_t}}\big)r_t\Big]\d t+2\delta \e^{\delta r_t}\sigma_{\La_t}\sqrt{r_t}\d B_t.
  \end{equation}
  For any $c>0$, there exists a constant $M>0$ such that, for any $x\in \R_+$, any $i\in \S$,
  \[a_ib_i-a_{\min}(1-2\delta \alpha)x\leq -c +M\e^{-\delta x}.\]
  Therefore, taking expectation in both sides of \eqref{3.6} deduces that
  \[\frac{\d \E[\e^{\delta r_t}]}{\d t}\leq M\delta -c\delta \E[\e^{\delta r_t}],\]
  which implies that
  \[\E[\e^{\delta r_t}]\leq \E[\e^{\delta r_0}]\e^{-c\delta t}+M\delta \int_0^t\e^{-c\delta(t-u)}\d u.\]
  Hence,
  \[\sup_{t>0}\E[\e^{\delta r_t}] \ \text{is finite when $\dis \E[\e^{\delta r_0}]$ is finite},\]
  which implies that
  \[\int_{\R_+} \e^{\delta y}\pi(\d y, \S)<\infty \ \text{for $\delta<1/(2\alpha)$.}\]
  This means that when $a_{\min}>0$, the stationary distribution $\pi$ of $(r_t,\La_t)$ is light-tailed.

  (ii)\  For the case  $p\in (1,\kappa)$,
  \begin{equation}\label{3.1}
    \d r_t^p=\big[ -pa_{\La_t} r_t^p+p(a_{\La_t}b_{\La_t}+2(p-1)\sigma_{\La_t}^2)r_t^{p-1}\big]\d t+2p\sigma_{\La_t} r_t^{p-1}\sqrt{r_t} \d B_t.
  \end{equation}
  Denote by $\mathscr F_T^{\La}=\sigma\big\{\La_s; 0\leq s\leq T\big\}$ for $T>0$. By the independence of $(\La_t)$ and $(B_t)$, taking expectation in \eqref{3.1} conditioning on $\mathscr F_T^{\La}$ leads to
  \begin{equation*}
    \alpha_p'(t)=-pa_{\La_t}\alpha_p(t)+p(a_{\La_t}b_{\La_t}+2(p-1)\sigma_{\La_t}^2)\alpha_{p-1}(t), \quad t\in(0,T].
  \end{equation*}
  For any $\veps>0$ there exists $c>0$ such that
  \[\alpha_p'(t)\leq (-pa_{\La_t}+\veps)\alpha_p(t)+c,\]
  which deduces that
  \begin{equation}\label{3.2}
  \alpha_p(t)\leq \alpha_p(0)\e^{\int_0^t (-pa_{\La_s}+\veps)\d s}+c\int_0^t\e^{\int_u^t (-pa_{\La_s}+\veps)\d s}\d u.
  \end{equation}
  Taking expectation in both sides of \eqref{3.2} and using Lemma \ref{t3.1}, we obtain
  \[\E r_t^p\leq \E r_0^p C_2(p)\e^{(-\eta_p+\veps)t}+cC_2(p)\int_0^t \e^{(-\eta_p+\veps)(t-u)}\d u.\]
  When $\eta_p>\veps>0$, this implies
  \begin{equation}\label{3.4}
  \sup_{t>0} \E r_t^p<\infty, \quad \text{if $\E r_0^p<\infty$}.
  \end{equation}
  Put $P_t$ the semigroup associated with the Markov process $(r_t,\La_t)$. By Theorem \ref{t2.1}, the condition $\sum_{i\in\S}\mu_i a_i>0$ yields that for $\delta_{(x,i)}P_t$ converges weakly to $\pi$ as $t\ra\infty$. Namely, the distribution of $(r_t,\La_t)$ with $(r_0,\La_0)=(x,i)$ converges weakly to its stationary distribution $\pi$ as $t\ra \infty$. Thus, \eqref{3.4} implies that
  \begin{equation}\label{3.5}
  \int_{\R_+}y^p\pi(\d y,\S)\leq \liminf_{t\ra \infty}\int_{\R_+\times \S} y^p \d\big(\delta_{(x,i)}P_t\big)=\liminf_{t\ra \infty} \E r_t^p<\infty,
  \end{equation}
  if $\eta_p>0$ for $p>0$. Combining with Lemma \ref{t3.1}, if $a_{\min}\geq 0$, then $\eta_p>0$ for all $p>0$, and further $\pi$ owns finite moment of all orders; if $a_{\min}<0$, then for $1\leq p<\kappa$, $\eta_p>0$ and further $\pi$ owns finite $p^{th}$ moment.

  To show the moment of order $\kappa$ of $\pi$ is infinite, we use the proof by contradiction. Assume $\int_{\R_+} y^\kappa\pi(\d y,\S)<\infty$, then we take the initial distribution of $(r_0,\La_0)$ to be the stationary distribution $\pi$, which implies that the distribution of $(r_t,\La_t)$ is also $\pi$ for $t>0$. However, It\^o's formula yields that
\begin{align*}
  \alpha_\kappa(t)&=\int_0^t\e^{-\kappa\int_u^t a_{\La_s}\d s}\kappa\big(a_{\La_u}b_{\La_u}+2(\kappa-1)\sigma_{\La_u}^2\big)\alpha_{\kappa-1}(u)\d u+\alpha_\kappa(0)\e^{-\kappa\int_0^t a_{\La_s}\d s}\\
  &\geq \int_0^t\e^{-\kappa\int_u^t a_{\La_s}\d s} \kappa\min_{i\in\S}\{a_ib_i+2(\kappa-1)\sigma_i^2\} \alpha_{\kappa-1}(u)\d u.
\end{align*}
Since $\eta_\kappa=0$, Lemma \ref{t3.1} shows that
\[\e^{-p\int_0^ta_{\La_s}}\d s\geq C_1(p).\]
Hence,
\[\int_{\R_+}y^\kappa \pi(\d y, \S)=\E_{\pi} r_t^p=\E\alpha_p(t)\geq \int_0^t C_1(p)p\min_{i\in\S}\{a_ib_i+2(p-1)\sigma_i^2\}\E_{\pi} r_s^{p-1}\d s,\quad t>0,\]
where we use $\E_{\pi}$ to emphasize the initial distribution of $(r_t,\La_t)$ is $\pi$.
Note that $\E_{\pi} r_t^{\kappa-1}=\int_{R_+} y^{\kappa-1}\pi(\d y, \S)\in (0,\infty)$, then letting $t$ go to $+\infty$ in previous inequality leads to
$\int_{\R_+} y^\kappa \pi(\d y,\S)=\infty$, which is contradict to our assumption. Therefore, the $\kappa$-th moment of $\pi$ is infinite.
\end{proof}

\begin{rem}\label{rem-3.1}
In the last step of previous argument, note that merely using the fact $\delta_{(x,i)}P_t$ weakly converges to $\pi$, one can not derive $\int_{\R_+}y^p\pi(\d  y,\S)=\infty$ from that $\lim_{r\ra \infty} \E r_t^p =\infty$. We present a simple example below.
Let \[\mu_n(\d x)=\begin{cases}
   1-\frac{1}{\sqrt{n}}, & x\in \big[0,\frac{\sqrt{n- n^{-1}}}{\sqrt{n}-1}\big],\\
   \frac 1{\sqrt{n}}, &\big[n,n+\sqrt{n}-\sqrt{n-n^{-1}}\big],
\end{cases}\]
and $\mu(\d x)=1 $ if $x\in[0,1]$; $\mu(\d x)=0$, otherwise. It is easy to check that for any bounded continuous function $f$ on $[0,\infty)$,
\[\lim_{n\ra \infty}\int_{\R_+} f(x)\mu_n(\d x)=\int_{\R_+} f(x)\mu(\d x),\]
which means that $\mu_n$ weakly converges to $\mu$ as $n\ra \infty$.
On the contrary,
\begin{align*}
&\lim_{n\ra \infty} \int_{\R_+}x^3\mu_n(\d x)\\
&=\lim_{n\ra \infty}\frac 1{4\sqrt{n}} \big((n\!+\!\sqrt n\!-\!\sqrt{n\!-\!n^{-1}})^4-n^4\big)+\frac 14 \big(1\!-\!\frac{1}{\sqrt{n}}\big)\Big(\frac{\sqrt{n-n^{-1}}}{\sqrt n-1}\Big)^4=\infty.
\end{align*}
But
\[ \int_{\R_+} x^3\mu(\d x)<\infty.\]
\end{rem}

To deal with the case $\kappa\in (0,1]$, the methodology used in \cite{Bar} by replacing the function $y\mapsto |y|^p$ by $f: y\mapsto \frac{|y|^{p+2}}{1+y^2}$ to exploit the case $p\in (0,2)$ therein is actually not applicable. The reason is that the signs of coefficients of $f'(Y_t)$ vary according to the state $i$ in $\S$, and this causes the difficulty to control the quantity $Y_t\, f'(Y_t)$ by $f(Y_t)$. In the following, we exploit the long time behaviour of $(r_t,\La_t)$ when $\kappa\in (0,1)$ under the  boundedness of $\E r_t^{-1}$ for $t>0$.

\begin{thm}\label{t3.3}Suppose $N<\infty$, $\sum_{i\in\S} \mu_i a_i>0$, $a_{\min}<0$ and $\kappa\in (0,1]$.
Set $\tilde Q_1=Q-\mathrm{diag}(\tilde a_1,\ldots,\tilde a_N)$ where $\tilde a_i=-a_i$, $i\in \S$.
Put
\[\tilde \eta_1=-\max_{\gamma\in \mathrm{spec}(\tilde Q_1)} \mathrm{Re}\, \gamma.\]
Assume $a_ib_i\geq 4\sigma_i^2$ for all $i\in \S$ and $\tilde \eta_1>0$. Then the following assertions hold.
\begin{itemize}
\item[(i)] For any $p\in (0,\kappa)$, $\sup_{t>0} \E r_t^p<\infty$ if $\E r_0^p<\infty$, and further $\int_{\R_+} y^p\pi(\d y,\S)<\infty$.
\item[(ii)] For any $p\geq \kappa$, $\sup_{t>0}\E r_t^p=\infty$ and $\int_{\R_+} y^\kappa\pi(\d y,\S)=\infty$.
\end{itemize}
\end{thm}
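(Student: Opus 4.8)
The plan is to push the moment-ODE method of Theorem~\ref{t3.2} into the regime $p<1$, where the drift term $p(a_{\La_t}b_{\La_t}+2(p-1)\sigma_{\La_t}^2)r_t^{p-1}$ in \eqref{3.1} now carries the \emph{negative} power $r_t^{p-1}$ and can no longer be absorbed into $r_t^p$ by Young's inequality as in \eqref{3.2}. The entire argument therefore rests on a uniform-in-time control of a negative moment of $r_t$, which is exactly where the strengthened hypothesis $a_ib_i\geq 4\sigma_i^2$ and the assumption $\tilde\eta_1>0$ enter. First I would apply It\^o's formula to $r_t^{-1}$, obtaining
\[\d r_t^{-1}=\big[a_{\La_t}r_t^{-1}-(a_{\La_t}b_{\La_t}-4\sigma_{\La_t}^2)r_t^{-2}\big]\d t-2\sigma_{\La_t}r_t^{-3/2}\d B_t,\]
so that $a_ib_i\geq 4\sigma_i^2$ makes the second drift term nonpositive. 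Conditioning on the chain path $\mathscr F_T^{\La}$ and writing $\beta(t)=\E[r_t^{-1}\,|\,\mathscr F_T^{\La}]$ yields $\beta'(t)\leq a_{\La_t}\beta(t)$ (a localization argument disposes of the local martingale), hence $\beta(t)\leq\beta(0)\e^{\int_0^t a_{\La_s}\d s}$. Taking expectations and conditioning on $\La_0$ gives $\E r_t^{-1}\leq \E r_0^{-1}\,\E_{\La_0}\big[\e^{\int_0^t a_{\La_s}\d s}\big]$; observing that $\tilde Q_1=Q_{-1}$ and $\tilde\eta_1=\eta_{-1}$, I apply Lemma~\ref{t3.1} to $\tilde Q_1$ (its proof is insensitive to the sign of $p$) to bound this by $\tilde C_2\e^{-\tilde\eta_1 t}$. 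Since $\tilde\eta_1>0$, I conclude $\sup_{t>0}\E r_t^{-1}<\infty$ for a convenient starting point ($r_0=x>0$, say), and since $r_t^{p-1}=(r_t^{-1})^{1-p}$ with $1-p\in(0,1)$, Jensen's inequality upgrades this to $\sup_{t>0}\E r_t^{p-1}<\infty$.

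With the bound on $\E r_t^{p-1}$ in hand, part~(i) follows by running the conditional-ODE argument of Theorem~\ref{t3.2}. Conditioning \eqref{3.1} on $\mathscr F_T^{\La}$ gives $\alpha_p'(t)=-pa_{\La_t}\alpha_p(t)+p(a_{\La_t}b_{\La_t}+2(p-1)\sigma_{\La_t}^2)\alpha_{p-1}(t)$ for $\alpha_p(t)=\E[r_t^p\,|\,\mathscr F_T^{\La}]$; the coefficient multiplying $\alpha_{p-1}$ is bounded over $i\in\S$, so variation of constants yields
\[\alpha_p(t)\leq \alpha_p(0)\e^{-p\int_0^t a_{\La_s}\d s}+C\int_0^t \e^{-p\int_u^t a_{\La_s}\d s}\alpha_{p-1}(u)\,\d u.\]
The decisive point when taking expectations is that $\alpha_{p-1}(u)$ is $\mathscr F_u^{\La}$-measurable, so conditioning on $\mathscr F_u^{\La}$ and using the Markov property of $(\La_t)$ decouples it from the Feynman--Kac weight: $\E[\alpha_{p-1}(u)\e^{-p\int_u^t a_{\La_s}\d s}]\leq C_2(p)\e^{-\eta_p(t-u)}\E r_u^{p-1}$ by Lemma~\ref{t3.1}. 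Because $p<\kappa$ forces $\eta_p>0$, integrating gives $\sup_{t>0}\E r_t^p<\infty$, and weak convergence to $\pi$ (Theorem~\ref{t2.1}) together with Fatou's lemma then yields $\int_{\R_+}y^p\pi(\d y,\S)<\infty$.

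For part~(ii) and $p\geq\kappa$ I argue by contradiction: if $\sup_{t>0}\E r_t^p<\infty$, Fatou gives $\int y^p\pi<\infty$, hence $\int y^\kappa\pi<\infty$ since $\pi$ is a probability measure and $p\geq\kappa$; so it suffices to prove $\int_{\R_+}y^\kappa\pi(\d y,\S)=\infty$. Assume the contrary and start the process from $\pi$, so $(r_t,\La_t)\sim\pi$ for all $t$. The key sign fact is $a_ib_i+2(\kappa-1)\sigma_i^2\geq 4\sigma_i^2+2(\kappa-1)\sigma_i^2=2(\kappa+1)\sigma_i^2>0$, so with $c_\ast=\kappa\min_{i}\{a_ib_i+2(\kappa-1)\sigma_i^2\}>0$ the integral form of \eqref{3.1} gives $\alpha_\kappa(t)\geq c_\ast\int_0^t \e^{-\kappa\int_u^t a_{\La_s}\d s}\alpha_{\kappa-1}(u)\,\d u$. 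Taking expectations, decoupling as above, and using $\eta_\kappa=0$ (by \eqref{kappa}) with $\E_{\La_u}[\e^{-\kappa\int_0^{t-u}a_{\La_s}\d s}]\geq C_1(\kappa)$ from Lemma~\ref{t3.1}, I obtain
\[\int_{\R_+}y^\kappa\pi(\d y,\S)=\E_\pi r_t^\kappa\geq c_\ast C_1(\kappa)\Big(\int_{\R_+}y^{\kappa-1}\pi(\d y,\S)\Big)\,t.\]
Here $\int y^{\kappa-1}\pi\in(0,\infty)$: finiteness follows since the negative-moment bound of Step~1 passes to $\pi$ and $\int y^{\kappa-1}\pi\leq(\int y^{-1}\pi)^{1-\kappa}$ by H\"older (with $\pi$ a probability measure), while positivity is immediate. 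Letting $t\to\infty$ forces $\int y^\kappa\pi=\infty$, the desired contradiction.

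The main obstacle is Step~1: securing a uniform-in-time bound on a negative moment of $r_t$. This is genuinely new relative to the $\kappa>1$ analysis, which never touched the behaviour of $r_t$ near the origin, and it is precisely the strengthened Feller-type condition $a_ib_i\geq 4\sigma_i^2$ (keeping the $r_t^{-2}$-drift nonpositive) together with $\tilde\eta_1>0$ (making $\E[\e^{\int_0^t a_{\La_s}\d s}]$ integrable) that makes it go through. Everything else is a careful bookkeeping of the conditional moment ODEs, whose recurring device is the decoupling of $\alpha_{p-1}(u)$ from the exponential weight via the Markov property of $(\La_t)$, exactly as in Theorem~\ref{t3.2}.
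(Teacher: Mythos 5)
Your proposal is correct and follows essentially the same route as the paper's own proof: the uniform negative-moment bound on $\E r_t^{-1}$ via It\^o's formula under $a_ib_i\geq 4\sigma_i^2$ and $\tilde\eta_1>0$ (Lemma \ref{t3.1} applied to $\tilde Q_1$), the conditional moment ODE with variation of constants and $\eta_p>0$ for part (i), and the contradiction argument started from $\pi$ with $\eta_\kappa=0$ and $C_1(\kappa)$ for part (ii). If anything, your treatment is slightly more careful than the paper's: your decoupling of the $\mathscr F_u^{\La}$-measurable factor $\alpha_{p-1}(u)$ from the Feynman--Kac weight via the Markov property, and your explicit check that $a_ib_i+2(\kappa-1)\sigma_i^2\geq 2(\kappa+1)\sigma_i^2>0$, rigorously justify steps the paper compresses (notably its bound $\alpha_p'(t)\leq -pa_{\La_t}\alpha_p(t)+c$ with a deterministic constant $c$, even though $\alpha_{p-1}(t)$ is random).
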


\begin{proof}
  (i)\ When $\kappa\in (0,1]$, instead of the condition (H1), we need to assume a stronger condition that $a_ib_i\geq 4\sigma_i^2$ for all $i\in \S$.
  Then, for $p\in (0,\kappa)$, we obtain
  \begin{equation}\label{3.7}
  \d r_t^p=\big[-pa_{\La_t}r_t^p+p(a_{\La_t}b_{\La_t}+2(p-1)\sigma_{\La_t}^2)r_t^{p-1}\big]\d t+2p\sigma_{\La_t}r_t^{p-1}\sqrt{r_t}\d B_t.
  \end{equation}
  We need to justify the finiteness of $\E r_t^{p-1}$ for $p\in (0,\kappa)$. For this purpose, we only need to show the boundedness of $\E r_t^{-1}$.
  It holds
  \begin{align*}
  \d r_t^{-1}&=[a_{\La_t} r_t^{-1} -(a_{\La_t}b_{\La_t}-4\sigma_{\La_t}^2)r_t^{-2}]\d t-2\sigma_{\La_t} r_t^{-2}\sqrt{r_t}\d B_t\\
  &\leq -\tilde a_{\La_t} r_t^{-1}\d t-2\sigma_{\La_t} r_t^{-2}\sqrt{r_t}\d B_t.
  \end{align*}
  By Lemma \ref{t3.1} and $\tilde \eta_1>0$,
  \[\E r_t^{-1}\leq \E r_0^{-1}\e^{-\int_0^t \tilde a_{\La_s}\d s} \leq \E r_0^{-1}C_2(1)\e^{-\tilde \eta_1 t},
  \] and hence $\sup_{t>0} \E r_t^{-1} <\infty$ if $\E r_0^{-1}<\infty$. Furthermore, setting $(r_0,\La_0)=(x,i)\in (0,\infty)\times\S$, it follows immediately that
   \begin{equation}\label{3.8}
   \int_{\R_+} y^{-1}\pi(\d y, \S)\leq \liminf_{t\ra \infty} \E r_t^{-1}<\infty.
   \end{equation}
  Invoking \eqref{3.7}, there exists a constant $c>0$ such that
  \begin{align*}
    \alpha_p'(t) \leq -p a_{\La_t} \alpha_p(t)+ c,
  \end{align*}which yields that
  \[\alpha_p(t)\leq \alpha_p(0)\e^{\int_0^t -p a_{\La_s}\d s} +c\int_0^t\e^{\int_u^t-pa_{\La_s}\d s}\d u.\]
  Similar to \eqref{3.2} and \eqref{3.4}, the fact $\eta_p>0$ for $p\in (0,\kappa)$ implies that
  \[\sup_{t>0} \E r_t^p<\infty,\ \text{if}\ \E r_0^p<\infty.\]
  Hence
  \[\int_{R_+} y^p\pi(\d y, \S)\leq \liminf_{t\ra \infty} \E r_t^p<\infty.\]

  (ii)\ When $p=\kappa$, the equation \eqref{3.7} implies that
  \[\E r_t^\kappa\geq \E\int_0^t\e^{-\kappa\int_u^t a_{\La_s}\d s} \kappa\min_{i\in\S} \{a_i b_i+2(\kappa-1)\sigma_i^2\}\E[r_u^{\kappa-1}\big|\mathscr F_T^{\La}]\d u.\]
  Due to Lemma \ref{t3.1}, \eqref{3.8}, if $\int_{\R_+} y^\kappa \pi(\d y,\S)<\infty$, we take the initial distribution of $(r_t,\La_t)$ to be $\pi$ to yield that
  \[\int_{\R_+}y^\kappa \pi(\d y,\S)=\E_{\pi} r_t^\kappa\geq \kappa C_1(\kappa)\min_{i\in \S} \{a_i b_i+2(\kappa-1)\sigma_i^2\} t \int_{\R_+} y^{\kappa-1} \pi(\d y,\S),\]
  and hence $\int_{\R_+} y^\kappa \pi(\d y,\S)=\infty$ by letting $t\ra \infty$ in previous inequality. This is contradict to our assumption that $\int_{\R_+} y^{\kappa} \pi(\d y,\S)<\infty$, and we get the desired conclusion.
\end{proof}

\section{Conclusion}
We mainly focus on the tail behavior of stationary distributions of the CIR processes with regime-switching.  We provide an explicit connection between the heavy-tailed property of its stationary distribution and  the coefficients of CIR process with regime-switching. According to this result, heavy-tailed property of interest rates implies that the application of CIR model with regime-switching is more suitable in practice than CIR model without regime-switching. Further investigation on the quantitative property, such as first passage probability, of CIR process with regime-switching will be considered to learn more impact on the existence of regime-switching in the interest rates models. Besides, there is difficulty to estimate the development of the functional $\int_0^t f(\La_s)\d s$ for $f:\S\ra \R$ when $(\La_t)$ is a state-dependent switching process. Hence, at present stage, we cannot provide more information on the tail behavior of the stationary distribution for state-dependent regime-switching CIR process. More work is needed in this direction.

\end{document}